\documentclass[12pt,reqno]{amsart}
\usepackage{amsmath,amssymb,amsfonts,amsthm}
\usepackage[right=2.5cm, left=2.5cm, top=2.5cm, bottom=2.5cm]{geometry}
\usepackage{tikz}
\usetikzlibrary{positioning, arrows.meta, shapes.geometric}
\usetikzlibrary{matrix}
\usepackage{graphicx}
\usepackage{pgfplots}
\usepackage{subcaption}
\usepgfplotslibrary{fillbetween}
\usetikzlibrary{patterns}
\DeclareMathAlphabet{\mathpzc}{OT1}{pzc}{m}{it}

\newtheorem{theorem}{Theorem}[section]
\newtheorem{lema}[theorem]{Lemma}

\theoremstyle{definition}
\newtheorem{de}[theorem]{Definition}

\theoremstyle{remark}
\newtheorem{remark}[theorem]{Remark}

\numberwithin{equation}{section}

\begin{document}
	
\title[Navier-Stokes equations with dual-scale hereditary viscosity]{The Navier-Stokes equations with dual-scale hereditary viscosity: supercritical norm inflation and global well-posedness in critical spaces}
	
	\author[B. de Andrade]{Bruno de Andrade}
	\address[B. de Andrade]{Departamento de Matem\'atica\\
		Universidade Federal de Sergipe\\
		S\~ao Crist\'ov\~ao - SE\\
		Brazil.}
	\email{bruno@mat.ufs.br}
	
\begin{abstract}
	This manuscript investigates the Cauchy problem for an incompressible fluid flow governed by a dual-scale hereditary memory, representing a viscoelastic variant of the classical Navier-Stokes equations that captures anomalous momentum transport. The non-local dissipation breaks exact global scale invariance, dictating a pseudo-differential analysis within the H\"{o}rmander class $S^{-2}_{1,0}$ where the spatial gradient induces a fractional temporal penalty. We rigorously establish $L^q-L^p$ decay estimates and identify the critical Lebesgue threshold $p_c = N (\frac{1+\alpha_\infty}{1-\alpha_\infty})$. In the supercritical regime $1 < p < p_c$, we bypass the loss of spatial localization by mapping the frequency-modulated bilinear flow directly into Fourier space; by utilizing Bernstein's inequalities, we prove instantaneous norm inflation  at the origin and confirm intrinsic ill-posedness. Conversely, in the topological limit $p \to \infty$, we demonstrate that the dual-scale memory structurally prevents the collapse traditionally observed for classical fluids within the maximal critical Besov space $\dot{B}^{-1}_{\infty, \infty}$. By exploiting an asymmetric interpolation within Bony's para-differential calculus, we prove that the temporal smoothing  overpowers the high-high convective resonant cascade. This delicate analytical balance confines ill-posedness to the non-separable high-frequency tail of the Besov topology, thereby establishing global-in-time Hadamard well-posedness for small initial data possessing high-frequency adherence within $\dot{B}^{-\kappa}_{\infty, \infty}(\mathbb{R}^N)$, a well-posedness regime strictly broader than the separable little Besov closure $\dot{b}^{-\kappa}_{\infty, \infty}(\mathbb{R}^N)$, where $\kappa = \frac{1-\alpha_\infty}{1+\alpha_\infty}$.
\end{abstract}

\subjclass[2020]{Primary 35Q30, 35R09; Secondary 76D05, 42B25}
\keywords{Navier-Stokes equations, dual-scale memory, Besov spaces, norm inflation, para-differential calculus, anomalous diffusion}

\maketitle

\section{Introduction}

The mathematical theory of incompressible fluid dynamics is classically anchored by the Navier-Stokes equations, whose well-posedness in critical function spaces remains one of the most prominent challenges in nonlinear analysis. While the classical model is a triumph of continuum mechanics, the increasing need to understand complex fluid behaviors has driven the community to investigate non-local and anomalous momentum transport. In this manuscript, we establish the exact topological boundaries for the existence, stability, and supercritical collapse of incompressible flows governed by a non-local, dual-scale hereditary memory.

\subsection{Physical motivation and the multi-scale memory kernel}
To capture the dynamics of complex media, we replace the instantaneous kinematic viscosity of the classical model with a hereditary integration operator. The resulting initial-value problem, which we denote as the Navier-Stokes equations with Hereditary Viscosity (NSHV), governs the velocity field $u(x,t)$ and the hydrostatic pressure $p(x,t)$ as follows
\begin{equation*}
	\begin{cases}
		u_t = \displaystyle\int_0^t g(t-s) \Delta u(x,s) \,ds - (u \cdot \nabla) u - \nabla p, \quad \text{in } \mathbb{R}^N \times (0, \infty),\\
		\nabla \cdot u = 0, \quad \text{in } \mathbb{R}^N \times (0, \infty), \\
		u(x,0) = u_0(x), \quad \text{in } \mathbb{R}^N.
	\end{cases}
\end{equation*}

Projecting the system onto the space of distributionally divergence-free vector fields via the Leray-Hopf operator $\mathbb{P}$ canonically rewrites the convective term into the abstract form
\begin{equation}\label{eq:NSHV_projected}
	u_t = \int_0^t g(t-s) \Delta u(x,s) \,ds - \mathbb{P}\nabla \cdot (u \otimes u).
\end{equation}
The introduction of the memory kernel $g \in L^1_{\text{loc}}(\mathbb{R}^+)$ is not merely a mathematical abstraction, but a phenomenological necessity in the rational mechanics of non-Newtonian fluids and viscoelastic turbulence (see Barbu and Sritharan \cite{barbu2003}). In highly heterogeneous fluids---such as polymer melts, micellar networks, and concentrated suspensions---momentum transport exhibits a fading memory effect.

In this work, we assume that $g$ is a \textit{dual-scale}  admissible kernel. Much of the classical literature on fractional viscoelasticity employs single-scale power-law kernels (equivalent to standard Caputo or Riemann-Liouville fractional derivatives). However, as extensively documented in modern rheology (e.g., Jaishankar and McKinley \cite{jaishankar2013} and Mainardi \cite{mainardi2010}), a single power-law implies an infinite zero-shear viscosity and fails to capture the terminal relaxation of some  real complex fluids. The dual-scale kernel resolves this physical paradox: its Laplace transform encodes an anomalous, highly elastic response at short times (high frequencies) governed by an exponent $\alpha_\infty \in (0,1)$, which continuously crosses over to a distinct fluid-like terminal relaxation at long times (low frequencies) characterized by $\alpha_0 \in [0,1)$.

From a mathematical perspective, this physical fidelity comes at a severe analytical cost: the continuous crossover between distinct asymptotic regimes permanently breaks the exact global scale invariance of the system.

\subsection{State of the art and mathematical challenges}

A central feature of the classical Navier-Stokes model is its exact scale invariance, which dictates the topological boundaries for local and global existence. The quest for well-posedness in scale-invariant functional spaces has driven decades of profound harmonic analysis. This trajectory was rigorously inaugurated by Kato \cite{kato1984}, who established the well-posedness of strong solutions in the critical Lebesgue space $L^N(\mathbb{R}^N)$. Subsequent advancements led to seminal results in critical Besov spaces by Cannone \cite{cannone1995}, culminating in the optimal well-posedness threshold in $BMO^{-1}$ by Koch and Tataru \cite{koch2001}. However, this classical framework suffers from a severe topological collapse in the maximal scale-critical Besov space $\dot{B}^{-1}_{\infty, \infty}(\mathbb{R}^N)$. As established by Bourgain and Pavlovi\'c \cite{bourgain2008}, and further geometrically refined by Cheskidov and Shvydkoy \cite{cheskidov2010}, the classical flow is intrinsically ill-posed in this limit. The mechanism of failure relies on a catastrophic low-frequency resonance: high-frequency convective interactions generate a continuous energy cascade towards the macroscopic modes, causing instantaneous norm inflation even for arbitrarily small initial data.

Determining whether fractional and integro-differential variants of fluid models inherit this catastrophic collapse requires a robust mild framework. The rigorous mathematical analysis of the NSHV equations was inaugurated in \cite{barbu2003}, establishing local solvability in the $L^2$-setting via an $m$-accretive quantization. Integrating fractional calculus with abstract evolution theory, de Andrade, Silva, and Viana \cite{deandrade2021} advanced the model to $L^q$-settings for power-type materials. By utilizing the concept of $\epsilon$-regular mild solutions \cite{arrieta2000}, they circumvented the temporal singularity at the origin inherent to singular Volterra convolutions. The robustness of this abstract integro-differential framework was broadly generalized by de Andrade and Viana \cite{deandrade2017} to handle generic parabolic models with memory driven by critical nonlinearities. Pushing the topological boundaries further, de Andrade, Cuevas, and Dantas \cite{deandrade2024} established well-posedness in the expansive framework of homogeneous Besov-Morrey spaces, while de Andrade and Santana \cite{deandrade_santana} provided a unified abstract theory for local existence and blow-up alternatives in interpolation scales.

Despite these significant milestones, previous approaches to the NSHV equations invariably relied on single-scale fractional models or operated strictly within subcritical regimes. In those specific contexts, the single-scale kernels preserve a form of global scale invariance, allowing the linear resolvent to be treated via classical subordination principles and standard Wright functions. By introducing the dual-scale memory kernel, this analytical convenience breaks down entirely. The inherent absence of a global self-similar scaling dictates that the problem must be tackled via a highly refined pseudo-differential harmonic analysis.

The core mathematical challenge lies in resolving a delicate analytical balance. On one side, the Calder\'on-Zygmund nature of the Leray-Hopf projector coupled with the spatial gradient $\nabla$ forces the dispersion of momentum, continually attempting to drive the bilinear flow into instantaneous ill-posedness. On the other side, the dual-scale hereditary integration operator acts as a pseudo-differential smoothing mechanism, suppressing high-frequency oscillations via algebraic tails in Fourier space.

\subsection{Main contributions and methodology}
Our analysis relies on the pseudo-differential framework developed in \cite{deandrade2026} combined with Bony's para-differential calculus and Littlewood-Paley dyadic decompositions \cite{sawano2018}. The central results are summarized as follows:

\textbf{A. Linear estimates and topological barriers:} The exact physical symbol of the dual-scale resolvent globally belongs to the H\"ormander multiplier class $S^{-2}_{1,0}$. Consequently, the application of the spatial gradient exacts an anomalous temporal penalty. We derive sharp $L^q-L^p$ decay estimates, revealing that the local integrability of the associated singular kernel is governed by the structural constraint $1/q - 1/p < 1/N$. This geometric barrier identifies the critical Lebesgue threshold for local well-posedness as $p_c = N(\frac{1+\alpha_\infty}{1-\alpha_\infty})$. The parameter $\alpha_\infty \in (0,1)$ actively modulates this threshold: as $\alpha_\infty \to 0$, we recover the classical Navier-Stokes critical exponent $p_c \to N$; conversely, as $\alpha_\infty \to 1^-$, the temporal smoothing deteriorates, driving $p_c \to \infty$, thereby highlighting the severe topological penalty exacted by strong initial hereditary elasticity.

\textbf{B. Supercritical norm inflation ($1 < p < p_c$):} For Lebesgue topologies strictly below the critical threshold, the space lacks the structural capacity to absorb the convective loss of regularity. By adapting the frequency modulation technique of Christ, Colliander, and Tao \cite{Christ2003}, we demonstrate that the non-linear flow undergoes instantaneous norm inflation at the origin. A crucial harmonic innovation in our proof is the utilization of compactly supported spectra and Bernstein's inequalities, which elegantly circumvent the failure of the Hausdorff-Young inequality. This establishes that the singular integral flow is genuinely ill-posed without relying on localized spatial bounds, which are invariably destroyed by the Leray-Hopf projector.

\textbf{C. Well-posedness in the critical Besov limit:} As $p \to \infty$, the well-posedness boundary merges into the scale-critical Besov space $\dot{B}^{-\kappa}_{\infty, \infty}(\mathbb{R}^N)$, where $\kappa = \frac{1-\alpha_\infty}{1+\alpha_\infty}$. To determine whether the NSHV equations inherit the Bourgain-Pavlovi\'c catastrophe, we dissect the non-linear flow using Bony's paraproduct. By introducing asymmetric bilinear estimates within the high-high residue term, we interpolate the spatial base topology with a fractional temporal weight $t^\gamma$. This asymmetric interpolation avoids non-integrable temporal singularities at the origin and proves that the hereditary memory strictly overpowers the resonant cascade before it can inflate the macroscopic modes. Consequently, we secure global-in-time Hadamard well-posedness for small data in a class strictly broader than the separable little Besov closure $\dot{b}^{-\kappa}_{\infty, \infty}(\mathbb{R}^N)$, thereby mapping the exact topological boundary of the non-local parabolic regularization.

\subsection{Structure of the paper}
The manuscript is organized as follows. Section 2 formalizes the mathematical setting, detailing the structural hypotheses on the dual-scale memory kernel and the mild integral framework. Section 3 reviews the preliminary harmonic analysis tools, including Littlewood-Paley operators, Besov spaces, and the scale-invariant properties of the Leray-Hopf multiplier. In Section 4, we establish the $L^q-L^p$ singular decay estimates via pseudo-differential analysis. Section 5 identifies the critical Lebesgue threshold $p_c$, proving local well-posedness in the subcritical regime, followed by the rigorous proof of instantaneous norm inflation for $1 < p < p_c$. Finally, Section 6 establishes global well-posedness in the maximal critical limit via a contraction mapping.

	\section{Mathematical Setting and the Dual-Scale Resolvent}
	
\subsection{Admissible kernels}
The underlying linear dynamics are governed by the memory kernel $g \in L^1_{\text{loc}}(\mathbb{R}^+)$, whose structural behavior is characterized by the analytic properties of its Laplace transform $\hat{g}(\lambda) = \mathcal{L}\{g(t)\}(\lambda)$. This spectral representation, whose mathematical framework traces back to the classical theory of Widder \cite{Widder1941}, effectively resolves the high-frequency regime while systematically accommodating potential crossovers to terminal Newtonian relaxation or alternative viscoelastic behaviors at low frequencies. Throughout this work, we assume that $g$ belongs to the class of admissible kernels, satisfying the following structural hypotheses:
\begin{itemize}
	\item \textbf{(H1) Asymptotic sectoriality, regularity, and compatibility:} The function $\hat{g}(\lambda)$ remains analytic and non-vanishing in $\mathbb{C} \setminus (-\infty, 0]$. Furthermore, there exists a continuous function $\theta: (0, \infty) \to (0, \pi)$ satisfying $\inf_{r > 0} \theta(r) > 0$, which guarantees that the roots of the characteristic equation $\lambda + |\xi|^2 \hat{g}(\lambda) = 0$ strictly lie within the region $|\arg \lambda| \le \pi - \theta(|\lambda|)$. As $|\lambda| \to \infty$, the sector aperture stabilizes to $\theta(|\lambda|) \to \theta_\infty > 0$, which satisfies the high-frequency compatibility condition
	\begin{equation*}
		\theta_\infty < \frac{\pi \alpha_\infty}{1+\alpha_\infty},
	\end{equation*}
	for some $\alpha_\infty \in (0,1)$.
	\item \textbf{(H2) High-frequency asymptotics (short time):} There exist constants $C_1, C_2 > 0$ such that, for $|\lambda|$ sufficiently large within the principal sector $\Sigma_{\theta_\infty}$:
	\begin{equation*}
		C_1 |\lambda|^{-\alpha_\infty} \le |\hat{g}(\lambda)| \le C_2 |\lambda|^{-\alpha_\infty}.
	\end{equation*}
	\item \textbf{(H3) Low-frequency asymptotics (long time):} There exist an exponent $\alpha_0 \in [0,1)$ and constants $c_1, c_2 > 0$ such that, for $|\lambda|$ sufficiently small within $\mathbb{C} \setminus (-\infty, 0]$:
	\[ c_1 |\lambda|^{-\alpha_0} \le |\hat{g}(\lambda)| \le c_2 |\lambda|^{-\alpha_0}. \]
\end{itemize}

To illustrate the physical relevance of the admissible class and firmly ground our mathematical hypotheses in the rational mechanics of complex fluids, we highlight four canonical examples of anomalous momentum transport, emphasizing their spectral configurations.

\noindent \textbf{Example 1 (Pure fractional viscoelasticity).} The standard power-law fluid model, extensively validated in modern rheology for complex bulk and interfacial dynamics \cite{jaishankar2013}, is recovered by choosing the Riemann-Liouville fractional kernel, which yields the Laplace transform $\hat{g}(\lambda) = \lambda^{-\alpha}$ for $\alpha \in (0,1)$. This kernel satisfies (H1)--(H3) with exact scale invariance, wherein the high-frequency and low-frequency asymptotic exponents collapse to a single parameter $\alpha_\infty = \alpha_0 = \alpha$.

\noindent \textbf{Example 2 (Multi-term viscoelastic relaxation).} In complex viscoelastic fluids presenting structural heterogeneities (e.g., polymer melts), the momentum transport mechanism often transitions between distinct anomalous regimes over time. These dynamics can be modeled by a sum of fractional kernels, an approach systematically detailed in the theory of linear viscoelasticity \cite{mainardi2010}, yielding $\hat{g}(\lambda) = c_1 \lambda^{-\alpha} + c_2 \lambda^{-\beta}$ with $0 < \beta < \alpha < 1$. As $|\lambda| \to \infty$, the fluid response is dominated by the most singular exponent $\beta$, yielding $\alpha_\infty = \beta$, whereas as $|\lambda| \to 0$, the $\alpha$ term dominates the terminal relaxation, leading to $\alpha_0 = \alpha$.

\noindent \textbf{Example 3 (Fractional retardation and Cole-Cole relaxation).} Viscoelastic systems exhibiting a transition from an initial anomalous elastic state to a terminal Newtonian viscous profile at long times are modeled via fractional retardation kernels, an approach widely established in macroscopic fractional rheology \cite{mainardi2010}. The algebraic structure in the Laplace domain is given by $\hat{g}(\lambda) = (\lambda^\alpha + \gamma)^{-1}$ for $\alpha \in (0,1)$ and $\gamma > 0$. As $|\lambda| \to \infty$, the scaling $|\hat{g}(\lambda)| \sim |\lambda|^{-\alpha}$ establishes the short-time fractional exponent $\alpha_\infty = \alpha$. Conversely, as $|\lambda| \to 0$, the low-frequency transform behaves as $\gamma^{-1}|\lambda|^0$, definitively confirming the classical Newtonian crossover exponent $\alpha_0 = 0$.

\noindent \textbf{Example 4 (Multi-scale Prabhakar memory).} Momentum transport in macroscopically disordered fluid networks (such as dense micellar solutions) often transitions between two distinct anomalous regimes before reaching terminal saturation. Such behavior is captured by multi-scale kernels whose framework is governed by the generalized Mittag-Leffler functions of Prabhakar \cite{giusti2020, prabhakar1971}. The transform is defined as $\hat{g}(\lambda) = (\lambda^{\alpha_0} + \tau \lambda^{\alpha_\infty})^{-1}$ with $0 < \alpha_0 < \alpha_\infty < 1$ and $\tau > 0$. For short times ($|\lambda| \to \infty$), the high-frequency elastic response is dominated by the higher power $\tau \lambda^{\alpha_\infty}$, yielding the algebraic control $|\hat{g}(\lambda)| \sim \tau^{-1}|\lambda|^{-\alpha_\infty}$, confirming (H2). For long times ($|\lambda| \to 0$), the low-frequency relaxation is dictated by the lower power $\lambda^{\alpha_0}$, leading to $|\hat{g}(\lambda)| \sim |\lambda|^{-\alpha_0}$, confirming (H3).

\subsection{Mild framework}

Transitioning from the strong integro-differential formulation to an integral representation via the Duhamel principle circumvents classical differentiability requirements, thereby transferring the spatial and temporal derivatives directly onto the resolvent operator of the underlying linear non-local Stokes problem.

Applying the spatial Fourier transform (with frequency variable $\xi \in \mathbb{R}^N$) alongside the temporal Laplace transform (with dual variable $\lambda \in \mathbb{C}$) diagonalizes the spatial Laplacian and algebraically deconvolves the memory kernel, yielding the resolvent symbol in the joint frequency-Laplace domain
\begin{equation*}
	\widehat{S}(\lambda, \xi) = \frac{1}{\lambda + |\xi|^2 \hat{g}(\lambda)}.
\end{equation*}
The linear hereditary resolvent operator, denoted by $S(t)$, is then formally recovered via the inverse Laplace transform (the Bromwich integral) along a suitable contour $\Gamma$ in the complex plane, defining the spatial Fourier multiplier
\begin{equation*}
	\widehat{S}(t, \xi) = \frac{1}{2\pi i} \int_{\Gamma} e^{\lambda t} \frac{1}{\lambda + |\xi|^2 \hat{g}(\lambda)} \,d\lambda.
\end{equation*}

Treating the nonlinear convective term $\mathbb{P}\nabla \cdot (u \otimes u)$ as an external force field allows for the application of the variation of constants formula, thereby establishing the rigorous framework for mild solutions.

\begin{de}\label{def:mild_solution}
	Let $T > 0$ and $p \in (1, \infty)$. Given initial data $u_0 \in L^p(\mathbb{R}^N)$ satisfying the distributional divergence-free condition $\nabla \cdot u_0 = 0$, a measurable vector field $u: \mathbb{R}^N \times [0, T] \to \mathbb{R}^N$ is called a local-in-time mild solution to the NSHV equations on $[0, T]$ if it satisfies the following conditions:
	\begin{enumerate}
		\item[(i)] \textbf{Regularity:} $u \in C([0, T]; L^p(\mathbb{R}^N))$;
		\item[(ii)] \textbf{Incompressibility:} $\nabla \cdot u(t) = 0$ in the sense of distributions for all $t \in [0, T]$;
		\item[(iii)] \textbf{Integral Equation:} The function $u$ satisfies the Duhamel integral formulation
		\begin{equation}\label{eq:mild_formulation}
			u(t) = S(t)u_0 - \int_0^t \nabla S(t-s) \mathbb{P}(u \otimes u)(s) \,ds, \quad \text{for all } t \in [0, T],
		\end{equation}
		where the time integral converges absolutely in the Bochner sense in $L^p(\mathbb{R}^N)$.
	\end{enumerate}
	If the above conditions hold for every $T > 0$, $u$ is called a global-in-time mild solution.
\end{de}
	
\section{Preliminaries: Littlewood-Paley theory and function spaces}

In this section, we briefly review the foundational elements of the Littlewood-Paley mathematical framework, homogeneous Besov spaces, and Bony's para-differential calculus. These harmonic analysis tools are critical for establishing estimates for the non-local dual-scale resolvent operator and for decomposing the non-linear convective term. For a comprehensive treatise, we refer the reader to Grafakos \cite{grafakos2014} and Sawano \cite{sawano2018}.

\subsection{Dyadic partition of unity and Littlewood-Paley operators}
Let $\mathcal{C} = \{ \xi \in \mathbb{R}^N : \frac{3}{4} \le |\xi| \le \frac{8}{3} \}$ denote the standard dyadic annulus, and let $\mathcal{B} = \{ \xi \in \mathbb{R}^N : |\xi| \le \frac{4}{3} \}$ denote the corresponding low-frequency ball. We select smooth, radially symmetric cut-off functions $\chi \in C_c^\infty(\mathcal{B})$ and $\varphi \in C_c^\infty(\mathcal{C})$ with values in $[0,1]$ satisfying the spectral partition identities
\begin{equation*}
	\chi(\xi) + \sum_{j \ge 0} \varphi(2^{-j}\xi) = 1, \quad \text{and} \quad \sum_{j \in \mathbb{Z}} \varphi(2^{-j}\xi) = 1, \quad \forall \xi \neq 0.
\end{equation*}

For any $j \in \mathbb{Z}$, the homogeneous dyadic blocks $\Delta_j$ and the low-frequency cut-off operators $S_j$ are defined as frequency-localized Fourier multipliers. We denote by $\mathcal{S}'_h(\mathbb{R}^N)$ the space of tempered distributions with vanishing low-frequency components, rigorously defined as the subspace of $u \in \mathcal{S}'(\mathbb{R}^N)$ such that $\|S_j u\|_{L^\infty} \to 0$ as $j \to -\infty$. For any $u \in \mathcal{S}'_h(\mathbb{R}^N)$, we define
\begin{equation*}
	\Delta_j u = \mathcal{F}^{-1}\left( \varphi(2^{-j}\cdot) \widehat{u} \right) = \psi_j * u, \quad \text{where } \psi_j(x) = 2^{jN}\mathcal{F}^{-1}\{\varphi\}(2^jx);
\end{equation*}
\begin{equation*}
	S_j u = \sum_{k \le j-1} \Delta_k u = \mathcal{F}^{-1}\left( \chi(2^{-j}\cdot) \widehat{u} \right) = \phi_j * u, \quad \text{where } \phi_j(x) = 2^{jN}\mathcal{F}^{-1}\{\chi\}(2^jx).
\end{equation*}
In the distributional sense, any $u \in \mathcal{S}'_h(\mathbb{R}^N)$ can be recovered via the homogeneous Littlewood-Paley decomposition $u = \sum_{j \in \mathbb{Z}} \Delta_j u$.

\subsection{Homogeneous Besov spaces}
Equipped with the dyadic partition, we formalize the topology of homogeneous Besov spaces, which generalize standard Sobolev frameworks and precisely capture fractional regularity.

\begin{de}[Homogeneous Besov spaces \cite{sawano2018}]\label{def:besov_space}
	Let $s \in \mathbb{R}$ and $1 \le p, q \le \infty$. The homogeneous Besov space $\dot{B}^s_{p,q}(\mathbb{R}^N)$ is defined as the set of all tempered distributions $u \in \mathcal{S}'_h(\mathbb{R}^N)$ such that the norm
	\begin{equation*}
		\left\|u\right\|_{\dot{B}^s_{p,q}} = \left( \sum_{j \in \mathbb{Z}} 2^{jsq} \|\Delta_j u\|_{L^p}^q \right)^{1/q}
	\end{equation*}
	is finite, with the standard supremum modification for $q = \infty$:
	\begin{equation*}
		\left\|u\right\|_{\dot{B}^s_{p,\infty}} = \sup_{j \in \mathbb{Z}} 2^{js} \|\Delta_j u\|_{L^p}.
	\end{equation*}
\end{de}
The separable little Besov space, denoted by $\dot{b}^s_{p,\infty}(\mathbb{R}^N)$, is constructed as the topological closure within the $\dot{B}^s_{p,\infty}(\mathbb{R}^N)$ norm of the subspace $\mathcal{S}_0(\mathbb{R}^N)$ comprising Schwartz functions whose Fourier transforms are compactly supported away from the origin. It is uniquely characterized by the asymptotic adherence condition $\lim_{|j| \to \infty} 2^{js}\|\Delta_j u\|_{L^p} = 0$.

\subsection{Bernstein inequalities}
The localization of frequencies within dyadic spheres or annuli imposes strict constraints on spatial derivatives, allowing differential operators to be bounded by algebraic frequency prefactors.

\begin{lema}[Bernstein inequalities \cite{sawano2018}]\label{lem:bernstein}
	Let $1 \le p \le r \le \infty$. There exists a uniform constant $C > 0$, independent of $j$, such that for any multi-index $\beta \in \mathbb{N}^0_N$ and any $j \in \mathbb{Z}$, the following assertions hold:
	\begin{enumerate}
		\item[(i)] For spectral balls: If $\textnormal{supp}(\widehat{f}) \subset B(0, R 2^j)$, then
		\begin{equation*}
			\|\partial^\beta f\|_{L^p} \le C^{|\beta|+1} 2^{j|\beta|} \|f\|_{L^p}.
		\end{equation*}
		\item[(ii)] For spectral annuli: If $\textnormal{supp}(\widehat{f}) \subset \mathcal{C}(0, R_1 2^j, R_2 2^j)$, then
		\begin{equation*}
			C^{-|\beta|-1} 2^{j|\beta|} \|f\|_{L^p} \le \|\partial^\beta f\|_{L^p} \le C^{|\beta|+1} 2^{j|\beta|} \|f\|_{L^p}.
		\end{equation*}
		\item[(iii)] Embedding constraints: If $\textnormal{supp}(\widehat{f}) \subset B(0, R 2^j)$, then
		\begin{equation*}
			\|f\|_{L^r} \le C 2^{jN\left(\frac{1}{p} - \frac{1}{r}\right)} \|f\|_{L^p}.
		\end{equation*}
	\end{enumerate}
\end{lema}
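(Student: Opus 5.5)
The plan is the standard scaling-plus-multiplier argument. First, rescale. If $\operatorname{supp}\widehat f\subset B(0,R2^j)$, set $f_j(x)=f(2^{-j}x)$, so that $\operatorname{supp}\widehat{f_j}\subset B(0,R)$. Since $\|\partial^\beta f\|_{L^p}=2^{j|\beta|}2^{-jN/p}\|\partial^\beta f_j\|_{L^p}$ and $\|f\|_{L^p}=2^{-jN/p}\|f_j\|_{L^p}$, all three assertions reduce to the case $j=0$. This is why the constant does not depend on $j$.

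\textbf{Part (i).} Fix $\theta\in C_c^\infty(B(0,2R))$ with $\theta\equiv1$ on $B(0,R)$, and write $f=\check\theta*f$. Then $\partial^\beta f=(\partial^\beta\check\theta)*f$, and Young's inequality gives $\|\partial^\beta f\|_{L^p}\le\|\partial^\beta\check\theta\|_{L^1}\|f\|_{L^p}$. To get the geometric constant $C^{|\beta|+1}$, I will not estimate $\partial^\beta\check\theta$ all at once. Instead I iterate one derivative at a time: each step $\partial_i f=(\partial_i\check\theta)*f$ costs the fixed factor $\max_i\|\partial_i\check\theta\|_{L^1}$, because $\partial_i f$ again has spectrum in $B(0,R)$.

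\textbf{Part (iii).} Again use $f=\check\theta*f$ and Young's inequality with exponent $1+\frac1r=\frac1s+\frac1p$. This gives $\|f\|_{L^r}\le\|\check\theta\|_{L^s}\|f\|_{L^p}$, and undoing the scaling produces the factor $2^{jN(1/p-1/r)}$.

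\textbf{Part (ii), upper bound.} This follows directly from (i), since an annulus is contained in a ball.

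\textbf{Part (ii), lower bound.} This is the main obstacle, because as worded it cannot hold for every individual multi-index. Take $N\ge2$, $\beta=e_1$, and $\widehat f$ supported in a small cap of the annulus around the point $(0,1,0,\dots,0)$, of width $\varepsilon$ in the $\xi_1$-direction. Then $\|\partial_1 f\|_{L^p}\lesssim\varepsilon\|f\|_{L^p}$ (this follows from the $L^1$ bound of the rescaled kernel), so no uniform constant $C$ exists. I will therefore prove the inequality in the precise forms where it is true, and indicate that these are the forms used later in the paper. The mechanism is division by a symbol that does not vanish on the annulus.

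First, for $|\beta|=2k$ and the elliptic operator $(-\Delta)^k$, write $f=\mathcal F^{-1}\big(\tilde\varphi(\xi)|\xi|^{-2k}\big)*(-\Delta)^k f$. Here $\tilde\varphi\in C_c^\infty$ equals $1$ on the annulus and vanishes near the origin, so $\tilde\varphi|\xi|^{-2k}$ is smooth. Its inverse transform has $L^1$ norm at most $C^{2k+1}$, which I will again get by factoring into $k$ copies of $\tilde\varphi|\xi|^{-2}$ with suitably nested cutoffs.

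Second, for general $\beta$, I prove $\|f\|_{L^p}\le C^{|\beta|+1}2^{-j|\beta|}\sum_{|\gamma|=|\beta|}\|\partial^\gamma f\|_{L^p}$. This uses the identity $|\xi|^{2m}=\sum_{|\gamma|=m}\binom{m}{\gamma}\xi^{2\gamma}$ together with the multipliers $\tilde\varphi\,\xi^\gamma|\xi|^{-2m}$.

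The step I expect to require the most care is keeping the constants geometric in $|\beta|$. I will handle it by always peeling off one factor of $\xi_i/|\xi|^2$ at a time, so that each step costs a fixed amount, rather than bounding a high-order symbol directly.
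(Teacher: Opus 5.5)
Your proposal is correct and follows the standard argument behind the paper's citation to Sawano, since the paper gives no proof of its own: dilation to $j=0$, reproducing $f$ by convolution with the inverse Fourier transform of a smooth cutoff plus Young's inequality, and inverting symbols that do not vanish on the annulus; the only variation is that you peel off first-order multipliers one at a time to get the geometric constant $C^{|\beta|+1}$, rather than bounding the full high-order kernel at once. Your counterexample is also right: for $N\ge 2$ and $|\beta|\ge 1$ the lower bound in (ii) fails for a single multi-index and holds only in the form $\|f\|_{L^p}\le C^{|\beta|+1}2^{-j|\beta|}\sup_{|\gamma|=|\beta|}\|\partial^\gamma f\|_{L^p}$ (equivalently your sum over $|\gamma|=|\beta|$), as in the cited sources; this costs nothing downstream, because the paper only ever invokes the upper bounds in (i) and (iii) and never the lower half of (ii).
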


\subsection{Bony's paraproduct decomposition}\label{paraproduct}
The pointwise multiplication of two tempered distributions necessitates a rigorous analysis of overlapping frequency supports. Bony's para-differential calculus \cite{bahouri2011, bony1981, sawano2018} decouples the product $uv$ into directional non-local operators, filtering high-frequency modulations systematically.

Let $u, v \in \mathcal{S}'_h(\mathbb{R}^N)$. The product can be formally expanded using the resolution of the identity as $uv = \sum_{k, m \in \mathbb{Z}} \Delta_k u \Delta_m v$. Grouping these terms according to their relative frequency positions yields Bony's decomposition
\begin{equation*}
	uv = T_u v + T_v u + R(u,v),
\end{equation*}
where the operators are explicitly defined as:
\begin{enumerate}
	\item[(i)] The paraproduct of $v$ by $u$, denoted by $T_u v$, gathers low frequencies of $u$ interacting with high frequencies of $v$:
	\begin{equation*}
		T_u v = \sum_{k \in \mathbb{Z}} S_{k-1} u \Delta_k v.
	\end{equation*}
	\item[(ii)] The symmetric paraproduct of $u$ by $v$: $T_v u = \sum_{k \in \mathbb{Z}} S_{k-1} v \Delta_k u$.
	\item[(iii)] The high-high symmetric residue operator $R(u,v)$ captures close-range frequency interactions where neither factor asymptotically dominates
	\begin{equation*}
		R(u,v) = \sum_{k \in \mathbb{Z}} \sum_{|k-m| \le 1} \Delta_k u \Delta_m v.
	\end{equation*}
\end{enumerate}
The compact support of the generating functions $\chi$ and $\varphi$ ensures that the $j$-th dyadic block of these interactions is finitely supported:
\begin{equation*}
	\Delta_j(T_u v) = \sum_{|j-k| \le 4} \Delta_j(S_{k-1}u \Delta_k v), \quad \text{and} \quad \Delta_j R(u,v) = \sum_{k \ge j-3} \Delta_j(\Delta_k u \widetilde{\Delta}_k v),
\end{equation*}
where $\widetilde{\Delta}_k = \Delta_{k-1} + \Delta_k + \Delta_{k+1}$.

\subsection{Fourier multipliers and the Leray-Hopf projector}\label{Fourier multipliers and the Leray-Hopf projector}
To resolve the pressure term in the fluid dynamics system, we apply the Leray-Hopf projector $\mathbb{P}$ onto the divergence-free vector fields. In Fourier space, the matrix symbol of this projection is
\[\sigma(\mathbb{P})_{mn}(\xi) = \delta_{mn} - \xi_m\xi_n/|\xi|^2.\]

Since the symbol is homogeneous of degree zero and infinitely differentiable away from the origin, classical Calder\'on-Zygmund theory guarantees that $\mathbb{P}$ acts as a bounded linear operator on $L^p(\mathbb{R}^N)$ for all $1 < p < \infty$ \cite{lemarie2002}. However, such singular integrals generally fail to be bounded in the limits $L^1(\mathbb{R}^N)$ and $L^\infty(\mathbb{R}^N)$. To circumvent this topological obstruction, we exploit dyadic localization. When restricted to the annulus $\mathcal{C}_j$, the localized symbol $\sigma(\mathbb{P})(\xi) \varphi(2^{-j}\xi)$ is smooth and compactly supported. By Paley-Wiener theory and the foundational properties of Littlewood-Paley multipliers \cite[Chapter 6]{grafakos2014}, its inverse Fourier transform defines a Schwartz convolution kernel $K_j \in \mathcal{S}(\mathbb{R}^N) \subset L^1(\mathbb{R}^N)$. Furthermore, the scaling relation $K_j(x) = 2^{jN} K_0(2^j x)$ ensures that its $L^1$-norm is strictly scale-invariant ($\|K_j\|_{L^1} = \|K_0\|_{L^1}$). By Young's convolution inequality, the composite operator $\mathbb{P}\Delta_j$ maps $L^\infty(\mathbb{R}^N)$ to $L^\infty(\mathbb{R}^N)$ uniformly with respect to $j \in \mathbb{Z}$, a critical property extensively utilized in critical Besov spaces.
	
	\section{Linear estimates}
	
	This section establishes the $L^q - L^p$ decay estimates for the linear hereditary resolvent operator $S(t)$ and its composition with the spatial gradient operator, denoted by $\nabla S(t)$. In contrast to the classical heat semigroup generated by the standard Laplacian, the underlying linear dynamics are governed by a non-local memory kernel that imposes a spectral dichotomy upon the resolvent operator. Consequently, its regularizing capacity transitions from a short-time regime—dictated by the high-frequency parameter $\alpha_\infty \in (0,1)$—to a long-time asymptotic regime governed by the low-frequency parameter $\alpha_0 \in [0,1)$.
	
	Capturing this anomalous dissipation necessitates defining the respective local and global scaling exponents. For the short-time high-frequency regime, we set
	\begin{equation*}
		\gamma_{q,p} = \frac{1+\alpha_\infty}{2} N \left(\frac{1}{q} - \frac{1}{p}\right), \quad \text{and} \quad \tilde{\gamma}_{q,p} = \gamma_{q,p} + \frac{1+\alpha_\infty}{2}.
	\end{equation*}
	For the long-time low-frequency regime, the exponents are given by
	\begin{equation*}
		\beta_{q,p} = \frac{1+\alpha_0}{2} N \left(\frac{1}{q} - \frac{1}{p}\right), \quad \text{and} \quad \tilde{\beta}_{q,p} = \beta_{q,p} + \frac{1+\alpha_0}{2}.
	\end{equation*}
	
	The ensuing mapping properties, categorized into short- and long-time regimes, naturally encode the topological restrictions imposed by both the Leray-Hopf projector and the fundamental geometric integrability thresholds.
	
	\begin{lema}[Short-time linear estimates]\label{lem:short_time_estimates}
		Let $T > 0$. Under the hypotheses (H1) and (H2), there exists a constant $C_T > 0$ such that for any $0 < t \le T$, the following estimates hold:
		\begin{enumerate}
			\item[(i)] Pure resolvent: Let $q \in [1, \infty)$ and $p \in [q, \infty]$ satisfy the condition $\frac{1}{q} - \frac{1}{p} < \frac{2}{N}$. For any vector field $f \in L^q(\mathbb{R}^N)$, we have
			\begin{equation*}
				\|S(t)f\|_{L^p} \le C_T t^{-\gamma_{q,p}} \|f\|_{L^q}.
			\end{equation*}
			\item[(ii)] Gradient with projector: Let $q \in (1, \infty)$ and $p \in [q, \infty]$ satisfy the condition $\frac{1}{q} - \frac{1}{p} < \frac{1}{N}$. For any vector field $f \in L^q(\mathbb{R}^N)$, we have
			\begin{equation*}
				\|\nabla S(t) \mathbb{P} f\|_{L^p} \le C_T t^{-\tilde{\gamma}_{q,p}} \|f\|_{L^q}.
			\end{equation*}
		\end{enumerate}
	\end{lema}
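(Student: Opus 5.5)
Our plan is to write $S(t)$ and $\nabla S(t)\mathbb{P}$ as convolution operators, $S(t)f = K_t * f$ and $\nabla S(t)\mathbb{P}f = \mathbf{K}_t * f$. The estimates then follow from Young's inequality once we show
\[
\|K_t\|_{L^r} \lesssim_T t^{-\frac{1+\alpha_\infty}{2}N(1-\frac1r)}, \qquad 1+\tfrac1p=\tfrac1r+\tfrac1q,
\]
together with the analogous bound for $\mathbf{K}_t$ carrying the extra factor $t^{-\frac{1+\alpha_\infty}{2}}$. To get these we first need pointwise bounds on the multiplier $\widehat{S}(t,\xi)$ and its $\xi$-derivatives. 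We deform the Bromwich contour $\Gamma$ into a Hankel-type path $\Gamma_{t}$ inside the sector $|\arg\lambda|\le \pi-\theta(|\lambda|)$, which (H1) keeps free of zeros of $\lambda+|\xi|^2\hat g(\lambda)$. On the part of the path with $|\lambda|\gtrsim t^{-1}$, hypothesis (H2) gives $|\lambda+|\xi|^2\hat g(\lambda)|\gtrsim |\lambda|+|\xi|^2|\lambda|^{-\alpha_\infty}$; this uses the compatibility condition $\theta_\infty<\pi\alpha_\infty/(1+\alpha_\infty)$ to rule out cancellation. From this we aim for the bound
\[
|\partial_\xi^\beta \widehat{S}(t,\xi)| \le C_{T,\beta}\, |\xi|^{-|\beta|}\min\bigl\{1,\ (t^{\frac{1+\alpha_\infty}{2}}|\xi|)^{-2}\bigr\}, \qquad 0<t\le T,
\]
which is exactly the $S^{-2}_{1,0}$ behaviour in the self-similar variable $\eta=t^{\frac{1+\alpha_\infty}{2}}\xi$. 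For $t\le T$ the low-frequency portion of the path lies in a compact region, where continuity and non-vanishing of $\hat g$ give uniform bounds; this is where the constant $C_T$ enters.

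Next we rescale. Setting $\rho=t^{\frac{1+\alpha_\infty}{2}}$, we have $K_t(x)=\rho^{-N}k_t(x/\rho)$, where $\widehat{k_t}(\eta)=\widehat S(t,\eta/\rho)$ satisfies the symbol bounds above uniformly in $t\in(0,T]$. It therefore suffices to show $k_t\in L^r$ uniformly, since scaling then yields the factor $t^{-\gamma_{q,p}}$. We split $k_t$ with a dyadic decomposition $\sum_j\varphi(2^{-j}\eta)$ plus the cutoff $\chi$. For the high part $j\ge0$, integration by parts together with the symbol estimate gives $|\mathcal{F}^{-1}(\varphi_j\widehat{k_t})(x)|\lesssim 2^{j(N-2)}(1+2^j|x|)^{-M}$. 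Summing over $j$ produces $|k_t(x)|\lesssim |x|^{2-N}$ near the origin (with a logarithm when $N=2$) and rapid decay at infinity. The low-frequency part is smooth with rapid decay. Hence $k_t\in L^r$ exactly when $r(N-2)<N$, i.e. $1-\frac1r<\frac2N$, which is the condition $\frac1q-\frac1p<\frac2N$ in (i). The endpoint $q=1$ is allowed because we use only Young's inequality.

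For (ii) the symbol is $i\xi\,\widehat S(t,\xi)\,\sigma(\mathbb{P})(\xi)$. This has order $-1$ at high frequency, but at low frequency it is merely homogeneous of degree $1$ times a degree-$0$ singular factor. The high-frequency sum now gives $|\mathbf{k}_t(x)|\lesssim|x|^{1-N}$, which lies in $L^r$ iff $1-\frac1r<\frac1N$; this matches the hypothesis of (ii), and the extra $\rho^{-1}$ from the $\xi$ gives $t^{-\tilde\gamma_{q,p}}$. \textbf{The main obstacle} is the low-frequency piece $\chi(\eta)\,\eta\,\sigma(\mathbb{P})(\eta)\widehat{k_t}(\eta)$, which is not smooth at $0$. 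Its kernel decays only like $|x|^{-N-1}$, which is still integrable, so this piece lies in $L^1\cap L^\infty$ and hence in every $L^r$. We would prove this decay by a dyadic sum over $j<0$: each block has $L^1$ norm $\lesssim 2^{j}$, using the scale invariance of $\mathbb{P}\Delta_j$ from Section~\ref{Fourier multipliers and the Leray-Hopf projector}, and the blocks sum geometrically. This argument needs only that $\widehat S(t,\cdot)$ is smooth near $\xi=0$ uniformly for $t\le T$, which the contour representation provides. Applying Young's inequality yields both estimates. The exclusion of $q=1$ in (ii) is harmless here, though it reflects the fact that $\mathbb{P}$ alone is not $L^1$-bounded.
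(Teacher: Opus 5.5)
Your argument follows the paper's proof in substance. Both rescale by $\rho=t^{\frac{1+\alpha_\infty}{2}}$ and use that the rescaled multiplier $\widehat S(t,\rho^{-1}\eta)$ lies uniformly in $S^{-2}_{1,0}$, so $i\eta\,\widehat S$ has order $-1$. Both then read off the kernel singularity $|x|^{2-N}$, resp.\ $|x|^{1-N}$, apply Young's inequality with $1+\frac1p=\frac1r+\frac1q$, and scale back. Your dyadic summation makes explicit the ``classical'' kernel bound the paper invokes, and you prove (i) where the paper quotes \cite{deandrade2026}.

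The one real divergence is the treatment of $\mathbb{P}$. The paper peels it off, writing $\nabla S(t)\mathbb{P}f=\nabla S(t)(\mathbb{P}f)$. The symbol of $\nabla S(t)$ is smooth at the origin, so no low-frequency analysis is needed, and $\mathbb{P}$ is absorbed by its $L^q$-boundedness; that is exactly where the restriction $q\in(1,\infty)$ comes from. You build $\sigma(\mathbb{P})$ into the kernel instead. This costs you the extra dyadic argument for the non-smooth low-frequency piece (kernel decay $|x|^{-N-1}$). In exchange it reaches the endpoints $q=1$ and $p=q=\infty$, which the paper's route cannot.

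Two steps in your derivation of the symbol bounds need repair, though neither changes the overall architecture.

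\emph{The contour placement.} (H1) places the roots of $\lambda+|\xi|^2\hat g(\lambda)=0$ \emph{inside} $|\arg\lambda|\le\pi-\theta(|\lambda|)$. The zero-free region it guarantees is the complementary sector $\pi-\theta(|\lambda|)<|\arg\lambda|<\pi$, so a Hankel path placed where you put it may cross roots. Moreover, a lower bound $|\lambda+|\xi|^2\hat g(\lambda)|\gtrsim|\lambda|+|\xi|^2|\lambda|^{-\alpha_\infty}$ uniform in $\xi$ is really a statement about $\arg\hat g(\lambda)$. Neither the modulus bounds in (H2) nor the condition on $\theta_\infty$ controls that argument. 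The paper sidesteps this by importing the uniform $S^{-2}_{1,0}$ membership from \cite[Lemma 3.1]{deandrade2026}; you should do the same or add a hypothesis.

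\emph{The low-frequency symbol bound.} Your displayed bound $|\xi|^{-|\beta|}\min\{1,(\rho|\xi|)^{-2}\}$ is only of Mikhlin type at the origin. That is enough in (ii), where the factor $\eta$ gives the blocks $j<0$ geometric decay $2^{j}$. It is not enough in (i). When $r=1$ (i.e.\ $p=q$, in particular $p=q=1$) you need $\mathcal{F}^{-1}(\chi\,\widehat{k_t})\in L^1$ uniformly in $t$, and a Mikhlin multiplier need not have an integrable kernel. What you actually need is the full estimate $|\partial_\eta^\beta\widehat{k_t}(\eta)|\le C_\beta(1+|\eta|)^{-2-|\beta|}$ for $0<t\le T$. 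Equivalently, $|\partial_\xi^\beta\widehat S(t,\xi)|\lesssim\rho^{|\beta|}$ on $|\xi|\lesssim\rho^{-1}$. For small $t$ this is a high-frequency region in $\xi$, governed by (H2). Smoothness of $\widehat S(t,\cdot)$ near $\xi=0$ in the original variable, which you invoke, is therefore not the relevant statement.
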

	
	\begin{proof}
		The pure resolvent estimate (i) was rigorously established in \cite{deandrade2026} by analyzing the Bromwich integral of the multiplier over the high-frequency spectrum; hence, our focus remains strictly on (ii).
		
Given that the spatial gradient $\nabla$, the Leray-Hopf projector $\mathbb{P}$, and the resolvent operator $S(t)$ pairwise commute, their composition can be decoupled to act sequentially upon the vector field, yielding $\nabla S(t) \mathbb{P} f = \nabla S(t) (\mathbb{P} f)$. Rather than relying on ad-hoc physical kernel decompositions, we explicitly leverage the pseudo-differential framework established in \cite{deandrade2026}.

		For the short-time regime $t \in (0, T]$, the temporal scaling $\eta = \xi t^{\frac{1+\alpha_\infty}{2}}$ and the spatial dilation parameter $\mu = t^{\frac{1+\alpha_\infty}{2}}$ diagonalize the high-frequency operator. The scaled pure resolvent symbol $\tilde{\Psi}(t, \eta) = \widehat{S}(t, \mu^{-1}\eta)$ belongs uniformly to the H\"ormander class $S^{-2}_{1,0}$ \cite[Lemma 3.1]{deandrade2026}. Applying the spatial gradient $\nabla$ introduces the Fourier multiplier $i\xi$. Under the scaling transformation, this reads $i\xi = \mu^{-1} i\eta$. Thus, the gradient operator satisfies
		\begin{equation}\label{eq:grad_scaling}
			\nabla S(t)f(x) = \mu^{-1} \mathcal{F}^{-1}\big\{ i\eta \tilde{\Psi}(t, \eta) \widehat{f_{\mu}}(\eta) \big\}(\mu^{-1}x) = \mu^{-1} T_t^1(f_{\mu})(\mu^{-1}x),
		\end{equation}
		where $f_{\mu}(y) = f(\mu y)$, and $T_t^1$ is the pseudo-differential operator associated with the multiplier $m_1(t, \eta) = i\eta \tilde{\Psi}(t, \eta)$.
		
		Since $\tilde{\Psi} \in S^{-2}_{1,0}$, the gradient symbol $m_1$ structurally belongs to $S^{-1}_{1,0}$. By classical pseudo-differential theory, a symbol in the class $S^{-1}_{1,0}$ generates a spatial convolution kernel $G_t^1(w)$ that is smooth away from the origin while encapsulating a localized singularity of order $\mathcal{O}(|w|^{-(N-1)})$ near the origin. Consequently, the local integrability condition $G_t^1 \in L^r(\mathbb{R}^N)$ dictates that $\int_{|w|<1} |w|^{-r(N-1)} \,dw < \infty$, yielding the geometric constraint $r < \frac{N}{N-1}$. Utilizing Young's convolution inequality, the operator $T_t^1$ acts boundedly from $L^q(\mathbb{R}^N)$ to $L^p(\mathbb{R}^N)$ under the relation $1 + 1/p = 1/r + 1/q$. This mapping property explicitly translates the geometric singularity at the origin into the fundamental topological barrier
		\begin{equation*}
			\frac{1}{q} - \frac{1}{p} = 1 - \frac{1}{r} < 1 - \frac{N-1}{N} = \frac{1}{N}.
		\end{equation*}
		
		Under this geometric constraint, evaluating the $L^p$-norm of \eqref{eq:grad_scaling} yields
		\begin{equation*}
			\|\nabla S(t) f\|_{L^p} = \mu^{-1} \mu^{\frac{N}{p}} \|T_t^1(f_{\mu})\|_{L^p} \le C \mu^{-1 + \frac{N}{p}} \|f_{\mu}\|_{L^q} = C \mu^{-1 + N\left(\frac{1}{p} - \frac{1}{q}\right)} \|f\|_{L^q}.
		\end{equation*}
		Substituting the scaling parameter $\mu = t^{\frac{1+\alpha_\infty}{2}}$ explicitly recovers the targeted temporal decay exponent
		\begin{equation*}
			t^{\frac{1+\alpha_\infty}{2} \left[ -1 - N\left(\frac{1}{q} - \frac{1}{p}\right) \right]} = t^{-\frac{1+\alpha_\infty}{2} - \gamma_{q,p}} = t^{-\tilde{\gamma}_{q,p}}.
		\end{equation*}
		Finally, since the Leray-Hopf projector $\mathbb{P}$ is uniformly bounded in $L^q(\mathbb{R}^N)$ for all $q \in (1, \infty)$, the desired composite bounds follow immediately, completing the proof.
	\end{proof}
	
	\begin{lema}[Long-time linear estimates]\label{lem:long_time_estimates}
		Under the hypotheses (H1), (H2), and (H3), there exists a constant $C > 0$ such that for sufficiently large times $t > T$, the following asymptotic bounds hold:
		\begin{enumerate}
			\item[(i)] Pure resolvent: Let $q \in [1, \infty)$ and $p \in [q, \infty]$ satisfy the condition $\frac{1}{q} - \frac{1}{p} < \frac{2}{N}$. For any vector field $f \in L^q(\mathbb{R}^N)$, we have
			\begin{equation*}
				\|S(t)f\|_{L^p} \le C t^{-\beta_{q,p}} \|f\|_{L^q}.
			\end{equation*}
			\item[(ii)] Gradient with projector: Let $q \in (1, \infty)$ and $p \in [q, \infty]$ satisfy the condition $\frac{1}{q} - \frac{1}{p} < \frac{1}{N}$. For any vector field $f \in L^q(\mathbb{R}^N)$, we have
			\begin{equation*}
				\|\nabla S(t) \mathbb{P} f\|_{L^p} \le C t^{-\tilde{\beta}_{q,p}} \|f\|_{L^q}.
			\end{equation*}
		\end{enumerate}
	\end{lema}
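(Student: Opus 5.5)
Following the proof of Lemma \ref{lem:short_time_estimates}, I would refer part (i) to the pseudo-differential analysis of \cite{deandrade2026}, this time in the low-frequency regime, and give the argument only for (ii). The idea is the same scaling device, with the long-time dilation $\mu = t^{\frac{1+\alpha_0}{2}}$ and scaled frequency $\eta = \xi t^{\frac{1+\alpha_0}{2}}$ in place of the exponent $\alpha_\infty$. This choice is suggested by (H3): as $t\to\infty$ the Bromwich contour may be pushed into the region $|\lambda|\lesssim 1/t$. There $\hat g(\lambda)\sim\lambda^{-\alpha_0}$, so the characteristic equation $\lambda+|\xi|^2\hat g(\lambda)=0$ places the relevant roots at $|\lambda|\sim|\xi|^{2/(1+\alpha_0)}$. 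Hence $t\,|\xi|^{2/(1+\alpha_0)}$ is the natural dimensionless variable, which gives exactly $\eta=\xi t^{(1+\alpha_0)/2}$.

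The key step is to show that the rescaled symbol $\tilde\Psi(t,\eta)=\widehat S(t,\mu^{-1}\eta)$ lies in $S^{-2}_{1,0}$ uniformly for $t>T$. I would change variables $\lambda = z/t$ in the Bromwich integral. This gives
\[
\tilde\Psi(t,\eta)=\frac{1}{2\pi i}\int_{\Gamma'} e^{z}\,\frac{dz}{z+|\eta|^2\, t^{-\alpha_0}\hat g(z/t)}.
\]
For $|z|\lesssim t$ the weight $t^{-\alpha_0}\hat g(z/t)$ is comparable to $z^{-\alpha_0}$ by (H3). For large $|z|$ it is controlled through (H2) together with the sector condition (H1). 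The sector condition also keeps the denominator away from zero along a contour of angle strictly larger than $\pi/2$, uniformly in $t$. The symbol bounds $|\partial_\eta^\beta\tilde\Psi|\lesssim\langle\eta\rangle^{-2-|\beta|}$ then follow by differentiating under the integral. Each derivative gives powers of $\eta\, t^{-\alpha_0}\hat g(z/t)$ divided by further powers of the denominator, exactly as in \cite[Lemma 3.1]{deandrade2026}.

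With uniformity in hand, the remainder copies the short-time proof. The multiplier $m_1=i\eta\tilde\Psi$ lies in $S^{-1}_{1,0}$, so its kernel satisfies $|G^1_t(w)|\lesssim|w|^{-(N-1)}$ near the origin and decays rapidly at infinity. Thus $G^1_t\in L^r$ for $r<N/(N-1)$, uniformly in $t$. Young's inequality under $\frac1q-\frac1p<\frac1N$ and the dilation identity \eqref{eq:grad_scaling} then give the factor $\mu^{-1-N(1/q-1/p)}=t^{-\tilde\beta_{q,p}}$. Boundedness of $\mathbb P$ on $L^q$, valid for $1<q<\infty$, completes (ii).

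The main obstacle is the uniform symbol estimate. The kernel $g$ is not exactly homogeneous, so for fixed $\eta$ and $t>T$ the contour integral mixes the regimes $|z|\lesssim t$ and $|z|\gg t$. I would split $\Gamma'$ at $|z|= \delta t$. The large-$|z|$ piece contributes terms of size $\mathcal O(e^{-ct})$ times bounds coming from (H2), and these are absorbed into the constant. Intermediate values of $|\lambda|$, where neither (H2) nor (H3) applies, are handled by continuity and the non-vanishing of $\hat g$ on compact subsets of the sector from (H1). If the uniformity in $\eta$ fails at very large $|\eta|$ (high frequencies at large times), I would instead use a Littlewood–Paley cutoff $\chi(\eta)$. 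The high-frequency part would then be treated with the short-time lemma and the decay of $S$ at large frequencies, giving faster decay there than $t^{-\tilde\beta_{q,p}}$.
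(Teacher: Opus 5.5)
Your proposal is correct, but it follows a different route from the paper.

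\textbf{The paper's route.} The paper does not dilate. It takes from \cite{deandrade2026} a splitting $S(t)=S_0(t)+\sum_{j\ge1}S_j(t)$ and handles the high-frequency branch $S_0(t)$ by a spectral-gap claim of exponential damping $\mathcal{O}(e^{-\tilde c t})$. It bounds each low-frequency block $|\xi|\sim 2^{-j}$ by Bernstein as $C2^{-j[N(\frac1q-\frac1p)+1]}e^{-ct2^{-j\gamma_0}}\|f\|_{L^q}$. It then obtains $t^{-\tilde\beta_{q,p}}$ by splitting the sum at $2^{-j_0(t)}\sim t^{-\frac{1+\alpha_0}{2}}$.

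\textbf{Your route.} You rerun the proof of Lemma~\ref{lem:short_time_estimates} with $\mu=t^{\frac{1+\alpha_0}{2}}$. Everything then rests on the uniform bound $|\partial_\eta^\beta\tilde\Psi(t,\eta)|\lesssim\langle\eta\rangle^{-2-|\beta|}$ for $t>T$. This is not the cited Lemma 3.1, which concerns the $\alpha_\infty$-scaling, so you must prove it. Your outline does so correctly, because the $\eta$-derivatives never fall on $\hat g$. The only inputs are:
\begin{itemize}
\item[(a)] the lower bound $|z+|\eta|^2w(z)|\gtrsim|z|+|\eta|^2|w(z)|$, with $w(z)=t^{-\alpha_0}\hat g(z/t)$, on a fixed Hankel contour. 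This is a sectoriality property that (H1) must supply uniformly in $\xi$; it is invariant under $z=\lambda t$, so uniformity in $t$ comes for free.
\item[(b)] $|w(z)|\simeq|z|^{-\alpha_0}$ for $|z|\lesssim t$, from (H3) and continuity, with the part $|z|\gtrsim t$ exponentially small.
\end{itemize}

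\textbf{What your route buys.} You get a single kernel estimate for all frequencies, and $\frac1q-\frac1p<\frac1N$ enters transparently through the $|w|^{-(N-1)}$ singularity. You also treat high frequencies at large times correctly. There the branch point of $\hat g$ at $\lambda=0$ gives only algebraic decay, $|\widehat S(t,\xi)|\approx|\xi|^{-2}t^{-1-\alpha_0}=|\eta|^{-2}$. For Example~1, $\widehat S(t,\xi)=E_{1+\alpha}(-|\xi|^2t^{1+\alpha})$ and $E_{1+\alpha}(-x)\sim x^{-1}/\Gamma(-\alpha)$. So the decay is not the exponential damping the paper asserts for $S_0(t)$. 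Likewise, the per-block factor should be $(1+t^{1+\alpha_0}2^{-2j})^{-1}$ rather than $e^{-ct2^{-j\gamma_0}}$.

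Your $\langle\eta\rangle^{-2}$ bound absorbs this automatically. Your Littlewood--Paley fallback is therefore unnecessary, and it could not invoke Lemma~\ref{lem:short_time_estimates} anyway, since that is a $t\le T$ statement.

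\textbf{What the paper's route buys.} The dyadic approach needs no global symbol class at low frequencies, only Bernstein plus a decay factor per block. With the correct algebraic factors, its sum over $j\le j_0(t)$ still closes, but precisely because $N(\frac1q-\frac1p)<1$.
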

	
	\begin{proof}
		Analogous to the short-time regime, the pure resolvent asymptotic estimate (i) follows directly from the low-frequency dyadic framework established in \cite[Lemma 3.4]{deandrade2026} under hypotheses (H1)--(H3); therefore, we restrict our attention to the gradient estimate (ii).
		
		Because the generalized memory kernel $g$ breaks exact global scale invariance, its  limit cannot be deduced from isolated self-similar profiles. Instead, we rely on the low-frequency Littlewood-Paley dyadic partition $S(t) = S_0(t) + \sum_{j=1}^\infty S_j(t)$ constructed in \cite{deandrade2026}. For the high-frequency residual branch $S_0(t)$, the roots of the characteristic equation are strictly bounded away from the imaginary axis, establishing a uniform spectral gap $\text{Re}(\lambda) \le -\tilde{c} < 0$. By deforming the Bromwich contour strictly into the left half-plane, the corresponding resolvent component acquires an exponential damping $\mathcal{O}(e^{-\tilde{c}t})$. Although the spatial gradient introduces an additional algebraic factor, this exponential decay overwhelmingly dominates the long-time behavior, ensuring that the gradient contribution of $S_0(t)$ remains subordinate to the algebraic rate for $t \ge 1$.
		
		For the low-frequency components $j \ge 1$, the spatial frequencies are localized within the dyadic annuli $|\xi| \sim 2^{-j}$. Consequently, the action of the spatial gradient $\nabla$ geometrically extracts a spectral factor proportional to $2^{-j}$ (cf. Lemma \ref{lem:bernstein}). Applying this property to the uniform H\"ormander multiplier bounds derived in \cite{deandrade2026} yields
		\begin{equation*}
			\|\nabla S_j(t) f\|_{L^p} \le C 2^{-j} \cdot 2^{-jN\left(\frac{1}{q} - \frac{1}{p}\right)} e^{-ct 2^{-j\gamma_0}} \|\Delta_j f\|_{L^q} = C 2^{-j\left[N\left(\frac{1}{q} - \frac{1}{p}\right) + 1\right]} e^{-ct 2^{-j\gamma_0}} \|\Delta_j f\|_{L^q},
		\end{equation*}
		where $\gamma_0 = \frac{2}{1+\alpha_0}$ determines the speed of the moving poles toward the origin.
		
		To estimate the summation over $j \ge 1$ for large times $t \ge 1$, we bisect the series at the time-dependent critical index $j_0(t) \in \mathbb{N}$. This index is chosen such that the dynamic transition scale satisfies $t 2^{-j_0(t)\gamma_0} \sim 1$, which algebraically imposes $2^{-j_0(t)} \sim t^{-1/\gamma_0} = t^{-\frac{1+\alpha_0}{2}}$. Within the damped frequency scales $j \le j_0(t)$, the condition $t 2^{-j\gamma_0} \ge 1$ holds. Because the exponential damping term $e^{-ct 2^{-j\gamma_0}}$ decays for smaller indices $j$ (representing relatively higher frequencies), the sum is dominated by its largest term at the cutoff boundary $j = j_0(t)$:
		\begin{align*}
			\sum_{j=1}^{j_0(t)} \|\nabla S_j(t) f\|_{L^p} &\le C \|f\|_{L^q} \sum_{j=1}^{j_0(t)} 2^{-j\left[N\left(\frac{1}{q} - \frac{1}{p}\right) + 1\right]} e^{-ct 2^{-j\gamma_0}} \\
			&\le C \|f\|_{L^q} 2^{-j_0(t)\left[N\left(\frac{1}{q} - \frac{1}{p}\right) + 1\right]} \\
			&= C \|f\|_{L^q} \left( t^{-\frac{1+\alpha_0}{2}} \right)^{N\left(\frac{1}{q} - \frac{1}{p}\right) + 1} = C t^{-\beta_{q,p} - \frac{1+\alpha_0}{2}} \|f\|_{L^q} = C t^{-\tilde{\beta}_{q,p}} \|f\|_{L^q}.
		\end{align*}
		
		Conversely, over the ultra-low frequency scales $j > j_0(t)$, the temporal condition transitions to $t 2^{-j\gamma_0} < 1$, rendering the exponential damping factor of order $\mathcal{O}(1)$. Since the relation $q \le p$  guarantees that the algebraic exponent satisfies $N(1/q - 1/p) + 1 \ge 1 > 0$, the remaining summation transforms into a strictly convergent geometric series. This tail is thus bounded by its leading term at $j = j_0(t) + 1$:
		\begin{align*}
			\sum_{j=j_0(t)+1}^{\infty} \|\nabla S_j(t) f\|_{L^p} &\le C \|f\|_{L^q} \sum_{j=j_0(t)+1}^\infty 2^{-j\left[N\left(\frac{1}{q} - \frac{1}{p}\right) + 1\right]}\\
			& \le C 2^{-j_0(t)\left[N\left(\frac{1}{q} - \frac{1}{p}\right) + 1\right]} \|f\|_{L^q} \le C t^{-\tilde{\beta}_{q,p}} \|f\|_{L^q}.
		\end{align*}
		
		Synthesizing the bounds from both spectral domains and leveraging the uniform boundedness of $\mathbb{P}$ in $L^q(\mathbb{R}^N)$ secures the global asymptotic estimate $\|\nabla S(t) \mathbb{P} f\|_{L^p} \le C t^{-\tilde{\beta}_{q,p}} \|f\|_{L^q}$. This derivation closes the proof.
	\end{proof}
	
\section{Local well-posedness and the geometric threshold $p_c$}

Having established the linear decay estimates, we now address the local Hadamard well-posedness of the NSHV equations. The integral Duhamel formulation~\eqref{eq:mild_formulation} provides a natural framework to construct mild solutions via the Banach fixed-point theorem, simultaneously yielding existence, uniqueness, and continuous dependence on the initial data.

We define the bilinear integral form
\begin{equation*}
	B(u,v)(t) = \int_0^t \nabla S(t-s) \mathbb{P}(u \otimes v)(s) \,ds.
\end{equation*}
The mild formulation is thus rewritten as $u(t) = S(t)u_0 - B(u,u)(t)$. Applying the contraction mapping principle within $C([0,T]; L^p(\mathbb{R}^N))$ requires controlling the $L^p$-norm of this bilinear form. By invoking the short-time spatial derivative estimate from Lemma~\ref{lem:short_time_estimates}(ii), the non-local term imposes a strict integrability condition. Given vector fields $u,v \in L^p(\mathbb{R}^N)$, H\"older's inequality dictates that $u \otimes v \in L^{p/2}(\mathbb{R}^N)$. Applying the linear bound with the source space configured as $q = p/2$ yields
\begin{equation*}
	\|B(u,v)(t)\|_{L^p} \le C_T \int_0^t (t-s)^{-\tilde{\gamma}_{p/2,p}} \|u(s)\|_{L^p} \|v(s)\|_{L^p} \,ds.
\end{equation*}
The fractional temporal singularity is explicitly governed by the local exponent
\begin{equation*}
	\tilde{\gamma}_{p/2,p} = \frac{1+\alpha_\infty}{2} N \left(\frac{2}{p} - \frac{1}{p}\right) + \frac{1+\alpha_\infty}{2} = \frac{1+\alpha_\infty}{2} \left(\frac{N}{p} + 1\right).
\end{equation*}
Ensuring the absolute convergence of the temporal convolution near the singular limit $s \to t^-$ demands the strict bound $\tilde{\gamma}_{p/2,p} < 1$. The threshold at which this inequality degenerates into an equality precisely isolates the critical Lebesgue exponent
\begin{equation*}
	p_c = N\left(\frac{1+\alpha_\infty}{1-\alpha_\infty}\right).
\end{equation*}

Since the short-time material parameter satisfies $\alpha_\infty \in (0,1)$ and the spatial dimension is $N \ge 2$, the critical threshold satisfies $p_c > N \ge 2$. Consequently, assuming $p \ge p_c$ simultaneously guarantees $p/2 > 1$---securing the uniform boundedness of the Leray-Hopf projector $\mathbb{P}$ in $L^{p/2}(\mathbb{R}^N)$ via the Marcinkiewicz multiplier theorem---and the geometric integrability constraint $\frac{1}{p/2} - \frac{1}{p} < \frac{1}{N}$, ensuring the applicability of the spatial derivative bounds derived in Lemma~\ref{lem:short_time_estimates}.

\begin{remark}[Asymptotic limits of the critical threshold]\label{rem:alpha_dependence}
	The dependence of the critical Lebesgue exponent $p_c = N(\frac{1+\alpha_\infty}{1-\alpha_\infty})$ on the high-frequency memory parameter $\alpha_\infty \in (0,1)$ mathematically quantifies the delicate analytical balance between the non-local convective transport and the pseudo-differential dissipation. This dependence exhibits two fundamental asymptotic regimes:
	\begin{enumerate}
		\item[(i)] \textbf{The Newtonian limit ($\alpha_\infty \to 0$):} As the high-frequency fractional anomaly vanishes, the memory kernel functionally approaches instantaneous dissipation. Concurrently, the critical exponent converges to $\lim_{\alpha_\infty \to 0} p_c(\alpha_\infty) = N$. This convergence recovers the classical critical space $L^N(\mathbb{R}^N)$ established by Kato \cite{kato1984} for the standard Navier-Stokes equations, ensuring the topological consistency of our non-local framework.
		\item[(ii)] \textbf{The extreme elasticity limit ($\alpha_\infty \to 1^-$):} As the material memory intensifies, the initial temporal relaxation becomes highly singular, implying that the fluid retains a strong elastic response at short times. Analytically, the fractional temporal smoothing provided by the resolvent deteriorates. To counterbalance the unabated spatial loss of regularity induced by the convective derivative, the requisite spatial integrability diverges: $\lim_{\alpha_\infty \to 1^-} p_c(\alpha_\infty) = +\infty$. In this regime, the functional space must be extremely smooth to absorb the convective cascade, thereby severely restricting the admissible initial data.
	\end{enumerate}
\end{remark}

\subsection{Local well-posedness in the critical and subcritical regimes}
This temporal integrability boundary divides the existence theory into two distinct regimes: the integrable subcritical regime ($p > p_c$) and the scale-critical regime ($p = p_c$). We formalize this dichotomy in the following theorem.

\begin{theorem}[Local well-posedness and the threshold $p_c$]\label{thm:local_well_posedness}
	Let $N \ge 2$ and assume the structural hypotheses (H1) and (H2) hold. The critical Lebesgue threshold $p_c = N(\frac{1+\alpha_\infty}{1-\alpha_\infty})$ dictates the well-posedness limits for the NSHV equations:
	\begin{enumerate}
		\item[(i)] Subcritical regime ($p > p_c$): For any initial data $u_0 \in L^p(\mathbb{R}^N)$ satisfying $\nabla \cdot u_0 = 0$ distributionally, there exists a maximal time $T_{\text{max}} > 0$ and a unique local-in-time mild solution $u \in C([0, T_{\text{max}}); L^p(\mathbb{R}^N))$ to problem~\eqref{eq:NSHV_projected}.
		\item[(ii)] Critical regime ($p = p_c$): If in addition (H3) holds, then there exists a small constant $\varepsilon > 0$ such that, for any initial data $u_0 \in L^{p_c}(\mathbb{R}^N)$ satisfying $\nabla \cdot u_0 = 0$ distributionally and $\|u_0\|_{L^{p_c}} < \varepsilon$, problem~\eqref{eq:NSHV_projected} admits a unique global-in-time mild solution satisfying $\lim_{t \to 0^+} t^{\gamma_{p_c,2p_c}} \|u(t)\|_{L^{2p_c}} = 0$.
	\end{enumerate}
	Furthermore, in both cases, the data-to-solution map $u_0 \mapsto u$ is locally Lipschitz continuous, securing Hadamard well-posedness in their respective functional spaces.
\end{theorem}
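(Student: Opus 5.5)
Both parts follow the classical Kato--Fujita fixed-point scheme applied to $u = S(t)u_0 - B(u,u)$, with the linear input coming entirely from Lemmas~\ref{lem:short_time_estimates} and~\ref{lem:long_time_estimates}.

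For (i), I work in $X_T = C([0,T];L^p(\mathbb{R}^N))$ with $p>p_c$. Lemma~\ref{lem:short_time_estimates}(i) with $q=p$ gives $\|S(t)u_0\|_{L^p}\le C_T\|u_0\|_{L^p}$. Strong continuity of $t\mapsto S(t)u_0$ is checked first on Schwartz data, where the symbol $\widehat S(t,\xi)\to 1$ as $t\to0$ and $\widehat S(t,\xi)$ is continuous in $t$; it then extends by density and the uniform bound. For the bilinear term, Lemma~\ref{lem:short_time_estimates}(ii) with $q=p/2$ is admissible, because $p>p_c>N$ gives $1/q-1/p=1/p<1/N$ and $p/2>1$. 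It yields
\[
\|B(u,v)\|_{X_T}\le C_T\, \frac{T^{1-\tilde\gamma_{p/2,p}}}{1-\tilde\gamma_{p/2,p}}\, \|u\|_{X_T}\|v\|_{X_T},
\]
and $1-\tilde\gamma_{p/2,p}>0$ exactly when $p>p_c$. Continuity in time of $B(u,v)$ follows from the same singular integral by splitting $\int_0^{t+h}-\int_0^t$ and using dominated convergence. The standard quadratic contraction lemma then gives a unique solution in the ball of radius $2C\|u_0\|_{L^p}$ once $T^{1-\tilde\gamma}\|u_0\|_{L^p}$ is small. Two remarks complete this part. Since $T$ depends only on $\|u_0\|_{L^p}$, iterating the construction and taking the supremum of existence times defines $T_{\max}$. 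Divergence-freeness is preserved because the range of $\mathbb{P}$ is divergence-free and $S(t)$ commutes with $\nabla\cdot$.

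For (ii), I use the Kato space
\[
Y=\Big\{u:\ \sup_{t>0} t^{\gamma}\|u(t)\|_{L^{2p_c}}<\infty,\ \ \lim_{t\to0^+}t^\gamma\|u(t)\|_{L^{2p_c}}=0\Big\},
\]
where I expect $\gamma$ to have to be chosen piecewise: $\gamma_{p_c,2p_c}$ for $t\le 1$ and $\beta_{p_c,2p_c}$ for $t>1$. The reason is that hypothesis (H3) breaks scaling, so the two regimes decay at different rates. The linear bound comes from Lemmas~\ref{lem:short_time_estimates}(i) and~\ref{lem:long_time_estimates}(i) with $q=p_c$, $p=2p_c$. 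The vanishing limit at $0$ for the linear part is proved by approximating $u_0$ with $L^{p_c}\cap L^{2p_c}$ data, for which $t^\gamma\|S(t)\phi\|_{L^{2p_c}}\le Ct^\gamma\|\phi\|_{L^{2p_c}}\to0$. For the nonlinear term, $u\otimes v\in L^{p_c}$, and I apply the gradient estimates with $q=p_c$, $p=2p_c$; the constraint $1/(2p_c)<1/N$ holds. On short times this gives
\[
t^{\gamma}\int_0^t (t-s)^{-\tilde\gamma_{p_c,2p_c}} s^{-2\gamma}\,ds \;\lesssim\; \mathrm{B}\big(1-\tilde\gamma_{p_c,2p_c},\,1-2\gamma\big),
\]
and the exponents balance precisely because $p=p_c$: one checks $\tilde\gamma_{p_c,2p_c}+2\gamma_{p_c,2p_c}-\gamma_{p_c,2p_c}=1$ from $\tilde\gamma_{p_c/2,p_c}=1$. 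A small-data contraction in $Y$ then gives a global solution, and the same estimate gives Lipschitz dependence on $u_0$. Membership in $C([0,\infty);L^{p_c})$ follows from a further $L^{p_c}$ estimate of $B$ with $q=p_c$, $p=p_c$, where the time integral is again a Beta function.

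The main obstacle is the long-time part of (ii). For $t>1$ one must split $\int_0^t$ into $s<1$, $1<s<t/2$ and $t/2<s<t$, combining short- and long-time kernels with weights $s^{-2\gamma_{p_c,2p_c}}$ or $s^{-2\beta_{p_c,2p_c}}$, and close the estimate with the weight $t^{\beta_{p_c,2p_c}}$. Since $\alpha_0\le\alpha_\infty$ is not assumed, the long-time integrals need not balance. I would first check whether $\tilde\beta_{p_c,2p_c}+2\beta_{p_c,2p_c}\ge 1+\beta_{p_c,2p_c}$, that is $\tfrac{1+\alpha_0}{2}\big(\tfrac{N}{p_c}+1\big)\ge1$. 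If this fails, I would fall back to a weight $\min(t^{\gamma},t^{\beta'})$ with some admissible $\beta'\le\beta_{p_c,2p_c}$ chosen so that the tail integrals converge.
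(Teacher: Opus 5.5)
Your part (i) and the short-time core of (ii) coincide with the paper's proof. In (i) this is the contraction in $C([0,T];L^p(\mathbb{R}^N))$ via Lemma~\ref{lem:short_time_estimates}(ii) with $q=p/2$. In (ii), for $p=p_c$, it is the Kato weight $t^{\gamma_{p_c,2p_c}}=t^{(1-\alpha_\infty)/4}$, the $L^{p_c}\to L^{2p_c}$ gradient bound with $\tilde\gamma_{p_c,2p_c}=\frac{3+\alpha_\infty}{4}$, and the Beta-function balance $\tilde\gamma_{p_c,2p_c}+\gamma_{p_c,2p_c}=\tilde\gamma_{p_c/2,p_c}=1$. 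Your density argument for $t^{\gamma_{p_c,2p_c}}\|S(t)u_0\|_{L^{2p_c}}\to 0$ is more careful than the paper's.

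Two small additions are needed. First, uniqueness outside the ball: singular Gr\"onwall in (i), and in (ii) the vanishing condition at $t\to0^+$, which makes any two solutions small on a short interval, as in the paper. Second, define $T_{\max}$ as the supremum of existence times, using uniqueness. Restarting from $u(T)$ as new data is not legitimate, because $S(t)$ carries memory and is not a semigroup.

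\textbf{The gap.} You flag the global-in-time claim in (ii) but do not close it, and your fallback cannot close it. Since $\frac{N}{p_c}+1=\frac{2}{1+\alpha_\infty}$, your test reads $\frac{1+\alpha_0}{2}\big(\frac{N}{p_c}+1\big)=\frac{1+\alpha_0}{1+\alpha_\infty}\ge 1$, i.e.\ $\alpha_0\ge\alpha_\infty$.

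\emph{Case $\alpha_0\ge\alpha_\infty$.} No piecewise weight or splitting is needed. Here $\beta_{q,p}\ge\gamma_{q,p}$ and $\tilde\beta_{q,p}\ge\tilde\gamma_{q,p}$. Take the threshold $T$ of Lemma~\ref{lem:long_time_estimates} to be at least $1$. On $(T,\infty)$ the rates $t^{-\beta},t^{-\tilde\beta}$ are dominated by $t^{-\gamma},t^{-\tilde\gamma}$, and Lemma~\ref{lem:short_time_estimates} with the fixed constant $C_T$ covers $(0,T]$. Hence, for all $t,\tau>0$ and $r\in\{p_c,2p_c\}$,
\[
\|S(t)u_0\|_{L^{p_c}}+t^{\gamma_{p_c,2p_c}}\|S(t)u_0\|_{L^{2p_c}}\lesssim\|u_0\|_{L^{p_c}},\qquad \|\nabla S(\tau)\mathbb{P}f\|_{L^{r}}\lesssim\tau^{-\tilde\gamma_{p_c,r}}\|f\|_{L^{p_c}},
\]
and your short-time Beta computation closes verbatim on $(0,\infty)$.

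\emph{Case $\alpha_0<\alpha_\infty$ (Examples 3 and 4).} A large-time weight $t^{\beta'}$ must satisfy $\beta'\le\beta_{p_c,2p_c}$ for the linear term; under (H3) this rate is attained by low-frequency dilates $\phi(x/L)$, $L\to\infty$. Meanwhile the Duhamel majorant $t^{\beta'}\int_1^{t/2}(t-s)^{-\tilde\beta_{p_c,2p_c}}s^{-2\beta'}\,ds\simeq t^{1-\tilde\beta_{p_c,2p_c}-\beta'}$ (note $2\beta'<1$) stays bounded only if $\beta'\ge 1-\tilde\beta_{p_c,2p_c}$. Both requirements hold only if $\beta_{p_c,2p_c}+\tilde\beta_{p_c,2p_c}\ge1$, which is exactly your failed test. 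Replacing the auxiliary exponent $2p_c$ by any $r$ changes nothing, since $\beta_{p_c,r}+\tilde\beta_{r/2,r}=\frac{1+\alpha_0}{1+\alpha_\infty}$ for every $r$. This is a scaling obstruction: the large-scale dynamics has critical index $N\frac{1+\alpha_0}{1-\alpha_0}<p_c$. So $L^{p_c}$ is subcritical at large scales, and smallness in it cannot drive a global perturbative argument.

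\textbf{The paper does not fill this gap.} Its proof derives $t^{\gamma_{p_c,2p_c}}\|S(t)u_0\|_{L^{2p_c}}\le C\|u_0\|_{L^{p_c}}$ for all $t>0$ from Lemmas~\ref{lem:short_time_estimates} and~\ref{lem:long_time_estimates}. It also uses the kernel $(t-s)^{-\tilde\gamma}$ with a $T$-independent constant for all times. Both steps are justified only when $\alpha_0\ge\alpha_\infty$. What the scheme, yours or the paper's, actually delivers is the global result under the additional hypothesis $\alpha_0\ge\alpha_\infty$. Otherwise it gives only small-data solutions on each bounded interval $[0,T]$, with $\varepsilon=\varepsilon(T)$.
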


\begin{proof}
	We divide the proof according to the respective regimes.
	
	\noindent\textbf{Proof of (i) Subcritical regime ($p > p_c$).}
	Let $T > 0$ and define the Banach space
	\[
	\mathcal{X}_T = C([0,T]; L^p(\mathbb{R}^N))
	\]
	equipped with the standard supremum norm $\|u\|_{\mathcal{X}_T} = \sup_{0 \le t \le T} \|u(t)\|_{L^p}$. Seeking a unique fixed point for the nonlinear mapping
	\[
	\Phi(u)(t) = S(t)u_0 - B(u,u)(t)
	\]
	in $\mathcal{X}_T$, we observe that since $p > p_c$, the geometric exponent satisfies $\tilde{\gamma}_{p/2,p} < 1$. By Lemma~\ref{lem:short_time_estimates}, the linear term $S(\cdot)u_0$ belongs to $\mathcal{X}_T$ and satisfies the uniform bound $\|S(\cdot)u_0\|_{\mathcal{X}_T} \le C_T \|u_0\|_{L^p}$. Applying the spatial derivative estimate with $q = p/2$ to the bilinear form yields, for any $u, v \in \mathcal{X}_T$:
	\begin{align*}
		\|B(u,v)(t)\|_{L^p} &\le C_T \int_0^t (t-s)^{-\tilde{\gamma}_{p/2,p}} \|u(s)\|_{L^p} \|v(s)\|_{L^p} \,ds \\
		&\le C_T \|u\|_{\mathcal{X}_T} \|v\|_{\mathcal{X}_T} \int_0^t (t-s)^{-\tilde{\gamma}_{p/2,p}} \,ds \\
		&\le C_T \frac{T^{1-\tilde{\gamma}_{p/2,p}}}{1-\tilde{\gamma}_{p/2,p}} \|u\|_{\mathcal{X}_T} \|v\|_{\mathcal{X}_T}.
	\end{align*}
	
	Verifying $\Phi(u) \in \mathcal{X}_T$ requires establishing the strong continuity of the mapping $t \mapsto \Phi(u)(t)$ in $L^p(\mathbb{R}^N)$. The strong continuity of the linear term $t \mapsto S(t)u_0$ down to $t=0$ follows from the sectorial properties of the resolvent symbol. For the bilinear term, let $0 \le t_1 < t_2 \le T$. We decompose the increment as
	\begin{align*}
		B(u,v)(t_2) - B(u,v)(t_1) &= \int_{t_1}^{t_2} \nabla S(t_2-s) \mathbb{P}(u \otimes v)(s) \,ds \\
		&\quad + \int_0^{t_1} \left[ \nabla S(t_2-s) - \nabla S(t_1-s) \right] \mathbb{P}(u \otimes v)(s) \,ds \\
		&=: I_1 + I_2.
	\end{align*}
	For $I_1$, utilizing the uniform boundedness of $u, v \in \mathcal{X}_T$, we estimate
	\[
	\|I_1\|_{L^p} \le C_T \|u\|_{\mathcal{X}_T} \|v\|_{\mathcal{X}_T} \int_{t_1}^{t_2} (t_2-s)^{-\tilde{\gamma}_{p/2,p}} \,ds = C_T \|u\|_{\mathcal{X}_T} \|v\|_{\mathcal{X}_T} \frac{(t_2-t_1)^{1-\tilde{\gamma}_{p/2,p}}}{1-\tilde{\gamma}_{p/2,p}},
	\]
	which vanishes as $t_2 \searrow t_1$ since $1-\tilde{\gamma}_{p/2,p} > 0$. For $I_2$, the analyticity of the multiplier in the sector $\Sigma_{\theta_\infty}$ implies that the map $t \mapsto \nabla S(t)$ is strongly continuous for $t > 0$. Thus, the integrand converges to zero in $L^p(\mathbb{R}^N)$ for almost every $s \in (0, t_1)$. Moreover, it is majorized by the integrable function $2C_T (t_1-s)^{-\tilde{\gamma}_{p/2,p}} \|u\|_{\mathcal{X}_T} \|v\|_{\mathcal{X}_T}$. By the Lebesgue Dominated Convergence Theorem, $\|I_2\|_{L^p} \to 0$. This confirms that $B(u,v) \in \mathcal{X}_T$. Setting the constant $K_T = C_T (1-\tilde{\gamma}_{p/2,p})^{-1} T^{1-\tilde{\gamma}_{p/2,p}}$, we conclude that $\Phi$ is well-defined and maps $\mathcal{X}_T$ into itself.
	
	To invoke the contraction mapping principle, we define the closed absorbing ball
	\[\mathcal{B}_R = \{ u \in \mathcal{X}_T : \|u\|_{\mathcal{X}_T} \le R \}\]
	with radius $R = 2C_T\|u_0\|_{L^p}$. For any $u \in \mathcal{B}_R$, the triangle inequality yields
	\begin{equation*}
		\|\Phi(u)\|_{\mathcal{X}_T} \le \|S(\cdot)u_0\|_{\mathcal{X}_T} + \|B(u,u)\|_{\mathcal{X}_T} \le \frac{R}{2} + K_T R^2.
	\end{equation*}
	Furthermore, utilizing the algebraic identity $u \otimes u - v \otimes v = u \otimes (u-v) + (u-v) \otimes v$ and the bilinearity of $B$, we evaluate the difference for any $u,v \in \mathcal{B}_R$ as
	\begin{equation*}
		\|\Phi(u) - \Phi(v)\|_{\mathcal{X}_T} \le \|B(u, u-v)\|_{\mathcal{X}_T} + \|B(u-v, v)\|_{\mathcal{X}_T} \le 2 K_T R \|u-v\|_{\mathcal{X}_T}.
	\end{equation*}
	To ensure that $\Phi$ leaves $\mathcal{B}_R$ invariant and acts as a strict contraction, we impose the condition $2 K_T R \le 1/2$. Substituting the definitions of $K_T$ and $R$, this requirement is equivalent to
	\begin{equation}\label{eq:T_condition}
		T^{1-\tilde{\gamma}_{p/2,p}} \le \frac{1-\tilde{\gamma}_{p/2,p}}{8 C_T^2 \|u_0\|_{L^p}}.
	\end{equation}
	Since $1-\tilde{\gamma}_{p/2,p} > 0$, the left-hand side approaches zero as $T \to 0^+$. Therefore, for any given initial data $u_0 \in L^p(\mathbb{R}^N)$, there exists a sufficiently short time $T > 0$ such that \eqref{eq:T_condition} holds. By the Banach fixed-point theorem, $\Phi$ possesses a unique fixed point in $\mathcal{B}_R$, establishing local existence.
	
	To extend this local uniqueness from the ball $\mathcal{B}_R$ to the entire space $\mathcal{X}_T$, suppose $u, v \in \mathcal{X}_T$ are two mild solutions originating from the same initial data $u_0$, and let $w = u - v$. By the bilinearity of $B$, their difference satisfies the integral equation
	\[w(t) = -B(u, w)(t) - B(w, v)(t).\]
	Given that $u$ and $v$ belong to $C([0,T]; L^p(\mathbb{R}^N))$, their norms are uniformly bounded on $[0,T]$ by some constant $M > 0$. Estimating the $L^p$-norm of $w(t)$ yields
	\begin{equation*}
		\|w(t)\|_{L^p} \le C_T M \int_0^t (t-s)^{-\tilde{\gamma}_{p/2,p}} \|w(s)\|_{L^p} \,ds.
	\end{equation*}
	Since the exponent satisfies $\tilde{\gamma}_{p/2,p} < 1$, we can invoke the singular Gr\"onwall inequality (see Henry \cite{henry1981}). As the homogeneous driving term is zero, the inequality mandates $\|w(t)\|_{L^p} = 0$ for all $t \in [0,T]$, proving that $u \equiv v$ globally in $\mathcal{X}_T$.
	
	Now, we address the maximal time of existence. The local solution constructed on $[0, T]$ can be evaluated at the endpoint to supply new initial data $u(T) \in L^p(\mathbb{R}^N)$, allowing the solution to be extended to a larger interval $[0, T']$. Since the admissible step size depends strictly on the magnitude of the $L^p$-norm via \eqref{eq:T_condition}, we can iterate this continuation argument to generate a maximal interval of existence $[0, T_{\text{max}})$. If $T_{\text{max}} < \infty$, the local existence bounds dictate that the $L^p$-norm must grow unbounded; otherwise, a uniform time step would permit a trivial extension beyond $T_{\text{max}}$. This confirms the standard blow-up alternative: if $T_{\text{max}} < \infty$, then necessarily $\limsup_{t \nearrow T_{\text{max}}} \|u(t)\|_{L^p} = \infty$.
	
	Finally, to establish continuous dependence on the initial data within the subcritical regime, let $u, v \in \mathcal{B}_R$ be two mild solutions originating from divergence-free initial data $u_0, v_0 \in L^p(\mathbb{R}^N)$, respectively. Utilizing the algebraic identity for the bilinear form, their difference satisfies
	\begin{equation*}
		u(t) - v(t) = S(t)(u_0 - v_0) - B(u, u-v)(t) - B(u-v, v)(t).
	\end{equation*}
	Taking the norm in the Banach space $\mathcal{X}_T$ and applying the previously derived bilinear estimates yields
	\begin{align*}
		\|u - v\|_{\mathcal{X}_T} &\le \|S(\cdot)(u_0 - v_0)\|_{\mathcal{X}_T} + \|B(u, u-v)\|_{\mathcal{X}_T} + \|B(u-v, v)\|_{\mathcal{X}_T} \\
		&\le C_T \|u_0 - v_0\|_{L^p} + K_T \left( \|u\|_{\mathcal{X}_T} + \|v\|_{\mathcal{X}_T} \right) \|u - v\|_{\mathcal{X}_T}.
	\end{align*}
	Since both solutions reside within the closed ball $\mathcal{B}_R$, we have $\|u\|_{\mathcal{X}_T} + \|v\|_{\mathcal{X}_T} \le 2R$. Recalling the sharp contraction condition~\eqref{eq:T_condition}, which inherently enforces $2 K_T R \le 1/2$, the estimate reduces to
	\begin{equation*}
		\|u - v\|_{\mathcal{X}_T} \le C_T \|u_0 - v_0\|_{L^p} + \frac{1}{2} \|u - v\|_{\mathcal{X}_T}.
	\end{equation*}
	Absorbing the non-linear residual into the left-hand side immediately generates the local Lipschitz continuity bound for the data-to-solution map
	\begin{equation*}
		\|u - v\|_{\mathcal{X}_T} \le 2 C_T \|u_0 - v_0\|_{L^p},
	\end{equation*}
	which successfully completes the verification of local well-posedness in the sense of Hadamard for the subcritical regime.
	
	\noindent\textbf{Proof of (ii) Critical regime ($p = p_c$).}
	Reaching the geometric threshold $p = p_c$ triggers an analytical obstruction, as the exponent evaluates precisely to $\tilde{\gamma}_{p_c/2, p_c} = 1$. This structural barrier causes the temporal integration kernel to diverge logarithmically, rendering the unweighted space $C([0,T]; L^{p_c}(\mathbb{R}^N))$ structurally inadequate to close the fixed-point argument. Circumventing this critical logarithmic collapse requires the introduction of a generalized Kato-type auxiliary space endowed with a fractional temporal weight.
	
	Let us fix $q = 2p_c$. We define the generalized critical space $\mathcal{K}_\infty$ as the Banach space of functions $u \in C([0, \infty); L^{p_c}(\mathbb{R}^N)) \cap C((0, \infty); L^q(\mathbb{R}^N))$ satisfying the initial adherence condition $\lim_{t \to 0^+} t^{\gamma_{p_c,q}} \|u(t)\|_{L^q} = 0$, equipped with the composite norm
	\[	\|u\|_{\mathcal{K}_\infty} = \sup_{t > 0} \|u(t)\|_{L^{p_c}} + \sup_{t > 0} t^{\gamma_{p_c,q}} \|u(t)\|_{L^q},	\]
	where the  exponent evaluates to $\gamma_{p_c,q} = \frac{1+\alpha_\infty}{2} N \left(\frac{1}{p_c} - \frac{1}{q}\right) = \frac{1-\alpha_\infty}{4}$. We seek a unique global fixed point for the nonlinear mapping
	\[\Phi(u)(t) = S(t)u_0 - B(u,u)(t)\]
	in $\mathcal{K}_\infty$.
	
	By Lemma~\ref{lem:short_time_estimates} and Lemma~\ref{lem:long_time_estimates}, the linear term satisfies the dual bounds $\|S(t)u_0\|_{L^{p_c}} \le C \|u_0\|_{L^{p_c}}$ and $t^{\gamma_{p_c,q}} \|S(t)u_0\|_{L^q} \le C \|u_0\|_{L^{p_c}}$ globally in time. Furthermore, the strong continuity property of the linear resolvent ensures $\lim_{t \to 0^+} t^{\gamma_{p_c,q}}\|S(t)u_0\|_{L^q} = 0$. Thus, $S(\cdot)u_0 \in \mathcal{K}_\infty$ with uniform bound $\|S(\cdot)u_0\|_{\mathcal{K}_\infty} \le C_0 \|u_0\|_{L^{p_c}}$.
	
	To verify that $B(u,v) \in \mathcal{K}_\infty$, we evaluate the bilinear form with respect to both base topologies. For the $L^{p_c}$ target, the tensor condition $u \otimes v \in L^{p_c}$ dictates that the pseudo-differential mapping occurs from $L^{p_c} \to L^{p_c}$, which incurs a spatial derivative penalty of $\tilde{\gamma}_{p_c, p_c} = \frac{1+\alpha_\infty}{2}$. By extracting the requisite temporal weight from the auxiliary $L^q$-norms of the fields, we obtain for any $u, v \in \mathcal{K}_\infty$:
	\begin{align*}
		\|B(u,v)(t)\|_{L^{p_c}} &\le C \int_0^t (t-s)^{-\tilde{\gamma}_{p_c,p_c}} \|u(s)\|_{L^q} \|v(s)\|_{L^q} \,ds \\
		&\le C \|u\|_{\mathcal{K}_\infty} \|v\|_{\mathcal{K}_\infty} \int_0^t (t-s)^{-\frac{1+\alpha_\infty}{2}} s^{-2\gamma_{p_c,q}} \,ds.
	\end{align*}
	Since $2\gamma_{p_c,q} = \frac{1-\alpha_\infty}{2}$, the sum of the temporal singularities inside the convolution evaluates to exactly $\frac{1+\alpha_\infty}{2} + \frac{1-\alpha_\infty}{2} = 1$. This critical scaling explicitly balances the temporal integration to $t^0$, yielding a finite bounded profile proportional to the Beta function $\mathbf{B}\left(\frac{1+\alpha_\infty}{2}, \frac{1-\alpha_\infty}{2}\right)$. Thus, we acquire the estimate $\|B(u,v)(t)\|_{L^{p_c}} \le C_1 \|u\|_{\mathcal{K}_\infty} \|v\|_{\mathcal{K}_\infty}$.
	
	Similarly, targeting the auxiliary $L^q$ space, the tensor mapping originates from $L^{q/2} \to L^q$, thereby triggering the spatial derivative penalty $\tilde{\gamma}_{q/2,q} = \frac{3+\alpha_\infty}{4}$. This evaluation yields
	\begin{align*}
		\|B(u,v)(t)\|_{L^q} &\le C \int_0^t (t-s)^{-\tilde{\gamma}_{q/2,q}} \|u(s)\|_{L^q} \|v(s)\|_{L^q} \,ds \\
		&\le C \|u\|_{\mathcal{K}_\infty} \|v\|_{\mathcal{K}_\infty} \int_0^t (t-s)^{-\frac{3+\alpha_\infty}{4}} s^{-\frac{1-\alpha_\infty}{2}} \,ds.
	\end{align*}
	Evaluating the convolution explicitly produces the factor $t^{-\frac{1-\alpha_\infty}{4}} \mathbf{B}\left(\frac{1+\alpha_\infty}{2}, \frac{1-\alpha_\infty}{4}\right)$. Multiplying this result by the spatial weight $t^{\gamma_{p_c,q}} = t^{\frac{1-\alpha_\infty}{4}}$ identically cancels the temporal divergence, securing the symmetric uniform bound $t^{\gamma_{p_c,q}} \|B(u,v)(t)\|_{L^q} \le C_2 \|u\|_{\mathcal{K}_\infty} \|v\|_{\mathcal{K}_\infty}$.  Defining $C_\ast = \max\{C_1, C_2\}$, we obtain the global bilinear control $\|B(u,v)\|_{\mathcal{K}_\infty} \le C_\ast \|u\|_{\mathcal{K}_\infty} \|v\|_{\mathcal{K}_\infty}$. Consequently, $\Phi$ is well-defined and maps $\mathcal{K}_\infty$ into itself.
	
	To invoke the contraction mapping principle, we construct the closed absorbing ball
	\[
	\mathcal{B}_R = \{ u \in \mathcal{K}_\infty : \|u\|_{\mathcal{K}_\infty} \le R \}
	\]
	with radius $R = 2C_0\|u_0\|_{L^{p_c}}$. For any $u \in \mathcal{B}_R$, the global estimates yield
	\begin{equation*}
		\|\Phi(u)\|_{\mathcal{K}_\infty} \le \|S(\cdot)u_0\|_{\mathcal{K}_\infty} + \|B(u,u)\|_{\mathcal{K}_\infty} \le \frac{R}{2} + C_\ast R^2.
	\end{equation*}
	Using the bilinear identity $u \otimes u - v \otimes v = u \otimes (u-v) + (u-v) \otimes v$, we evaluate the difference for any $u,v \in \mathcal{B}_R$ as
	\begin{equation*}
		\|\Phi(u) - \Phi(v)\|_{\mathcal{K}_\infty} \le 2 C_\ast R \|u-v\|_{\mathcal{K}_\infty}.
	\end{equation*}
	To ensure that $\Phi$ leaves $\mathcal{B}_R$ invariant and acts as a strict contraction, we enforce the condition $2 C_\ast R \le 1/2$. Substituting the definition of $R$, this requirement translates explicitly into a smallness condition on the initial data
	\begin{equation}\label{eq:smallness_condition}
		\|u_0\|_{L^{p_c}} \le \frac{1}{8 C_0 C_\ast} := \varepsilon.
	\end{equation}
	Provided $\|u_0\|_{L^{p_c}} < \varepsilon$, the mapping $\Phi$ possesses a unique fixed point in $\mathcal{B}_R$, generating a global-in-time mild solution.
	
	To elevate this uniqueness to the entire space $\mathcal{K}_\infty$, suppose $u, v \in \mathcal{K}_\infty$ are two mild solutions originating from the same small initial data. Let $w = u - v$. By the inherent definition of $\mathcal{K}_\infty$, any solution intrinsically satisfies $\lim_{t \to 0^+} t^{\gamma_{p_c,q}}\|u(t)\|_{L^q} = 0$. Thus, for any arbitrarily small $\delta > 0$, there exists a time $T_0 > 0$ such that the local  restriction satisfies $\|u\|_{\mathcal{K}_{T_0}} + \|v\|_{\mathcal{K}_{T_0}} < \delta$, where $\|\cdot\|_{\mathcal{K}_{T_0}}$ denotes the norm taken over the truncated interval $[0, T_0]$. Evaluating the difference $w = -B(u, w) - B(w, v)$ strictly on $[0, T_0]$ yields
	\begin{equation*}
		\|w\|_{\mathcal{K}_{T_0}} \le C_\ast \left( \|u\|_{\mathcal{K}_{T_0}} + \|v\|_{\mathcal{K}_{T_0}} \right) \|w\|_{\mathcal{K}_{T_0}} \le C_\ast \delta \|w\|_{\mathcal{K}_{T_0}}.
	\end{equation*}
	Choosing $\delta < (2 C_\ast)^{-1}$, we obtain a strict contraction $\|w\|_{\mathcal{K}_{T_0}} \le \frac{1}{2} \|w\|_{\mathcal{K}_{T_0}}$, which forces $w \equiv 0$ on $[0, T_0]$. Standard continuation arguments for Volterra integral equations then propagate this identity for all $t > 0$, ensuring global uniqueness in $\mathcal{K}_\infty$.
	
	Finally, continuous dependence on the initial data within the critical regime follows by evaluating the difference $u-v$ directly within the weighted topology of $\mathcal{K}_\infty$. By applying the global bilinear control and invoking the initial smallness threshold~\eqref{eq:smallness_condition} to enforce the strict contraction barrier $C_\ast (\|u\|_{\mathcal{K}_\infty} + \|v\|_{\mathcal{K}_\infty}) \le 2 C_\ast R \le 1/2$, we immediately absorb the non-linear components into the left-hand side, obtaining the global Lipschitz bound
	\begin{equation*}
		\|u - v\|_{\mathcal{K}_\infty} \le 2 C_0 \|u_0 - v_0\|_{L^{p_c}},
	\end{equation*}
	securing the complete Hadamard well-posedness framework and concluding the proof.
\end{proof}

\subsection{Supercritical ill-posedness via norm inflation}

We now demonstrate that the geometric integrability boundary $p_c = N(\frac{1+\alpha_\infty}{1-\alpha_\infty})$ established in Theorem~\ref{thm:local_well_posedness} is sharp. For any Lebesgue index $p < p_c$, the spatial topology is too weak to absorb the loss of regularity induced by the convective flow. To capture this dimensional breakdown, we evaluate the regularity of the data-to-solution map near the origin. Assuming local well-posedness in the Hadamard sense would imply uniform continuity of the flow map, thereby necessitating the uniform boundedness of its Picard iterates. Adapting the frequency modulation machinery of Christ, Colliander, and Tao \cite{Christ2003}, and modifying the approach of \cite{deandrade2026} to account for the non-local convective transport, we demonstrate instantaneous norm inflation of the bilinear iterate.

\begin{theorem}[Norm inflation for $1 < p < p_c$]\label{thm:norm_inflation}
	Let $N \ge 2$, assume hypotheses (H1) and (H2) hold, and let $1 < p < p_c$. The Cauchy problem~\eqref{eq:NSHV_projected} is ill-posed at the origin in the following sense: for any $\varepsilon > 0$, there exists a sequence of localized divergence-free initial data $u_{0,k} \in L^p(\mathbb{R}^N)$ and a sequence of positive times $t_k \to 0^+$ such that
	\begin{equation*}
		\|u_{0,k}\|_{L^p(\mathbb{R}^N)} < \varepsilon, \quad \forall k \in \mathbb{N},
	\end{equation*}
	whereas the corresponding bilinear Picard iterates satisfy the instantaneous norm inflation
	\begin{equation*}
		\lim_{k \to \infty} \|u^{(2)}_k(t_k)\|_{L^p(\mathbb{R}^N)} = \infty.
	\end{equation*}
	Consequently, the data-to-solution map $u_0 \mapsto u$ fails to be uniformly continuous at the origin in the $L^p(\mathbb{R}^N)$ topology.
\end{theorem}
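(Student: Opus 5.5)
We follow the Bourgain--Pavlovi\'c / Christ--Colliander--Tao scheme applied to the second Picard iterate $u^{(2)}_k(t) = -B(S(\cdot)u_{0,k}, S(\cdot)u_{0,k})(t)$. We take the data to be a sum of two divergence-free wave packets at a high frequency $2^{k}$ that interact resonantly:
\[
u_{0,k}(x) = A_k \, \Theta\!\left(\frac{x}{L_k}\right) \big( e_2 \cos(2^k x_1) + e_2 \cos((2^k+\rho)x_1) \big) + \text{(divergence correction)}.
\]
Here $\Theta$ is a Schwartz function with $\widehat{\Theta}$ supported in a small ball, so the spectrum of $u_{0,k}$ is compact and lies in two thin shells near $\pm 2^k e_1$ and $\pm(2^k+\rho)e_1$. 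The divergence correction is of lower order in the slow variable $x/L_k$. Bernstein's inequalities (Lemma~\ref{lem:bernstein}) give $\|u_{0,k}\|_{L^p} \sim A_k L_k^{N/p}$. We choose $A_k = \varepsilon_k L_k^{-N/p}$ with $\varepsilon_k<\varepsilon$.

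The product $S(s)u_{0,k}\otimes S(s)u_{0,k}$ splits into a high-frequency part near $\pm 2^{k+1}$ and a low-frequency part at frequencies $|\xi|\lesssim \rho$. The high part is damped by $S(t-s)$, whose symbol lies in $S^{-2}_{1,0}$, and is discarded. The low part produces a term $A_k^2\, \Theta^2(x/L_k)\cos(\rho x_1)\, m_k(s)$, where $m_k(s) = |\widehat S(s,2^k e_1)|^2 \approx 1$ for $s \lesssim 2^{-2k/(1+\alpha_\infty)}$. This follows from the short-time scaling $\eta = \xi s^{(1+\alpha_\infty)/2}$ used in Lemma~\ref{lem:short_time_estimates}, together with $\widehat{S}(0,\xi)=1$. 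We then take
\[
t_k \sim 2^{-2k/(1+\alpha_\infty)},
\]
so that on $[0,t_k]$ the resolvent has not yet damped the input.

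Since the output frequency $\rho$ is $O(1)$, the operator $\nabla S(t-s)\mathbb P$ acts on the low part essentially as $\rho \cdot \mathbb{P}$ restricted to a fixed frequency annulus. Here $\mathbb P\Delta_j$ is bounded uniformly, as recorded in Section~\ref{Fourier multipliers and the Leray-Hopf projector}. The key choice is the polarization: the two packets must be arranged so that $\mathbb{P}\nabla\cdot$ of the low-frequency product does not vanish. This is the standard choice, taking the second packet along a direction transversal to $e_1$ (say, $e_1+\delta e_2$), so that the resonant frequency is not parallel to the amplitude. We then bound $\|u^{(2)}_k(t_k)\|_{L^p}$ from below by Bernstein applied to the compactly supported low-frequency spectrum, which is exactly where Hausdorff--Young is bypassed. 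This yields
\[
\|u^{(2)}_k(t_k)\|_{L^p} \gtrsim t_k\, A_k^2\, L_k^{N/p} = \varepsilon_k^2\, t_k\, L_k^{-N/p}.
\]
The remainder terms, namely the high-frequency output and the deviation $m_k-1$, are controlled by Lemma~\ref{lem:short_time_estimates}(ii).

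The remaining task is choosing $L_k$ so that the spectral localization holds and the lower bound blows up. The packets require $L_k^{-1} \ll \rho$ and, for the resonant structure, the interplay between $2^k$ and $L_k$. We choose $L_k = 2^{-k\theta}$ scaled, with $\rho \sim L_k^{-1}$ large. Then the lower bound becomes $\varepsilon^2 2^{k(-2/(1+\alpha_\infty) + \theta N/p)}$ up to the factor $\rho$ from the gradient. Letting $\rho$ and the packet width scale with $2^k$, the power is positive exactly when
\[
\frac{N}{p} + 1 > \frac{2}{1+\alpha_\infty},
\]
that is, when $\tilde\gamma_{p/2,p} > 1$, which is equivalent to $p < p_c$. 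This matches the threshold computed before Theorem~\ref{thm:local_well_posedness}.

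The main obstacle is the bookkeeping of these scales: making $\rho$, $L_k$ and $2^k$ consistent so that the resonant low part is genuinely lower frequency than $2^k$ yet carries the $N/p$ gain, while the remainder estimates stay subordinate. I would first try the most aggressive regime, $\rho \sim L_k^{-1} \sim 2^{k}/K$ with a large fixed $K$, and compute the exponents from there.

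Finally, the loss of uniform continuity follows in the standard way. If the flow map were uniformly continuous near $0$, then by the analytic (Picard) expansion the bilinear iterate would be bounded on small balls, which contradicts the inflation just established.
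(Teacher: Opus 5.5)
There is a genuine gap. The step that is supposed to produce the divergence is never established, and the mechanism your argument is built around cannot produce inflation in $L^p$.

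\textbf{The two regimes do not match.} Your analysis of the nonlinear term lives in the Bourgain--Pavlovi\'c regime $L_k\gtrsim 1$. It uses a resonant output frequency $\rho=O(1)$, treats $\nabla S(t-s)\mathbb{P}$ as $\rho\,\mathbb{P}$ on a fixed annulus, and separates the packets via $L_k^{-1}\ll\rho$. But for $p<\infty$ the $L^p$ norm does not see the carrier frequency: $\|u_{0,k}\|_{L^p}\sim A_kL_k^{N/p}$ whatever $2^k$ is. So, unlike in $\dot{B}^{-1}_{\infty,\infty}$, the high frequency gives no gain; it only forces $t_k\lesssim 2^{-2k/(1+\alpha_\infty)}$. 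In that regime your own lower bound $\varepsilon_k^2\rho\,t_kL_k^{-N/p}$ tends to $0$.

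The exponent count that does give $N/p+1>2/(1+\alpha_\infty)$ needs $L_k^{-1}$ and $\rho$ comparable to $2^k$, up to a small power. That contradicts both $\rho=O(1)$ and your requirement $L_k^{-1}\ll\rho$; you in fact write $\rho\sim L_k^{-1}$. In that regime the two-packet picture degenerates: the datum is just a $2^k$-dilate of a fixed profile. What must then be proved is that the corresponding fixed-scale bilinear profile has $L^p$ norm bounded below uniformly in $k$. That is exactly the step you defer as ``bookkeeping''.

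\textbf{Two further claims are wrong as stated.}
\begin{enumerate}
\item[(a)] On $[0,t_k]$ the output near $\pm2^{k+1}$ is not damped by $S(t-s)$, since $t_k$ was chosen precisely so that frequencies $\sim 2^k$ are undamped. It is therefore not negligible and must be removed by a frequency projection (harmless for a lower bound), not discarded.
\item[(b)] Lemma~\ref{lem:short_time_estimates}(ii) with $q=p/2$ cannot control your remainders. Here $\tilde\gamma_{p/2,p}>1$ exactly when $p<p_c$, so the time integral diverges, and that choice of $q$ requires $p>N$ in the first place.
\end{enumerate}

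\textbf{The missing idea.} Inflation below $p_c$ is a pure concentration/scaling effect, and this is how the paper argues. Take $u_{0,\lambda}=A_\lambda\phi(\lambda x)$ with $\widehat\phi$ compactly supported, and $t_\lambda=\tau\lambda^{-2/(1+\alpha_\infty)}$. Rescaling gives $\|u^{(2)}(t_\lambda)\|_{L^p}=A_\lambda^2\lambda^{-\kappa-N/p}\|F_\lambda\|_{L^p}$, while $\|u_{0,\lambda}\|_{L^p}=A_\lambda\lambda^{-N/p}\|\phi\|_{L^p}$. The ratio to the squared data norm is therefore $\lambda^{N/p-\kappa}\|F_\lambda\|_{L^p}$, with $N/p-\kappa>0$ for $p<p_c$.

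The only analytic input is a uniform lower bound $\|F_\lambda\|_{L^p}\ge c_1>0$. The paper obtains it in three steps:
\begin{enumerate}
\item It estimates the tensor in $L^p$ through $L^{2p}$ norms, so only the integrable singularity $\tilde\gamma_{p,p}=\frac{1+\alpha_\infty}{2}<1$ appears.
\item It passes to a limit profile $F_\infty$.
\item It uses the compact spectrum and a small $\tau$ to show non-degeneracy.
\end{enumerate}
The amplitude $A_\lambda=\varepsilon\lambda^{N/p-\delta}(\ln\lambda)^{-1}$, with $\delta=\frac14(\frac{N}{p}-\kappa)$, then finishes the argument.

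Your resonant construction could in principle be rescued with genuinely separated but nearly critical scales, for example $L_k^{-1}=2^{k(1-2\eta)}\ll\rho=2^{k(1-\eta)}\ll2^k$, at an $O(\eta)$ loss in the exponent. But the resonance then plays no role, and the argument is strictly more work than the single-bump scaling.
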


\begin{proof}
	Assuming the local well-posedness of the NSHV equations in $L^p(\mathbb{R}^N)$ in the Hadamard sense, the data-to-solution map must be uniformly continuous near the origin. This implies that the second Picard iterate (the bilinear integral term), defined by
	\begin{equation*}
		u^{(2)}(t) = - \int_0^t \nabla S(t-s) \mathbb{P} \big(S(s)u_0 \otimes S(s)u_0\big) \,ds,
	\end{equation*}
	must satisfy the uniform local estimate $\|u^{(2)}(t)\|_{L^p} \le C \|u_0\|_{L^p}^2$ for all sufficiently small initial data.
	
	Let $\phi \in \mathcal{S}(\mathbb{R}^N)$ be a non-trivial divergence-free vector field whose Fourier transform $\widehat{\phi}$ is compactly supported. To ensure the nonlinear flow is structurally non-degenerate, we choose $\phi$ such that its projected bilinear convective term generates a non-zero Fourier amplitude away from the origin, i.e., $\mathcal{F}\{\mathbb{P}\nabla \cdot (\phi \otimes \phi)\}(\zeta_0) \neq 0$ for some fixed $\zeta_0 \neq 0$. For a scaling parameter $\lambda \gg 1$ and an amplitude $A_\lambda > 0$, we define the initial datum
	\begin{equation*}
		u_{0,\lambda}(x) = A_\lambda \phi(\lambda x).
	\end{equation*}
	A direct evaluation yields $\|u_{0,\lambda}\|_{L^p(\mathbb{R}^N)} = A_\lambda \lambda^{-N/p} \|\phi\|_{L^p(\mathbb{R}^N)}$.
	
	Recall the dynamic scaling invariance of the high-frequency system. The temporal scaling modulation is $\sigma = \frac{2}{1+\alpha_\infty}$, and the spatial scaling factor is $\kappa = \frac{1-\alpha_\infty}{1+\alpha_\infty}$. We evaluate the nonlinear flow at the time $t_\lambda = \tau \lambda^{-\sigma}$, where $\tau > 0$ is a fixed observation window. Utilizing the spatial change of variables $y = \lambda x$ and the Fourier duality $\zeta = \xi/\lambda$, the linear evolution condenses into
	\begin{equation*}
		S(s \lambda^{-\sigma})u_{0,\lambda}(y/\lambda) = A_\lambda \big[K(s, \lambda)\phi\big](y),
	\end{equation*}
	where $K(s, \lambda)$ is the frequency-modulated pseudo-differential operator whose Fourier multiplier is $\widehat{K}(s, \lambda, \zeta) = \widehat{S}(s\lambda^{-\sigma}, \lambda \zeta)$.
	
	Substituting this representation into the bilinear form, modifying the integration variable to $s$ (representing the scaled temporal variable), and recognizing that the spatial derivative extracts a factor of $\lambda$, the iterate becomes
	\begin{equation*}
		u^{(2)}(t_\lambda, y/\lambda) = - \lambda^{1-\sigma} A_\lambda^2 \int_0^\tau \nabla K(\tau-s, \lambda) \mathbb{P} \big( K(s, \lambda)\phi \otimes K(s, \lambda)\phi \big)(y) \,ds.
	\end{equation*}
	A direct algebraic manipulation confirms the identity $1-\sigma = -\kappa$. Taking the $L^p$-norm with respect to the original variable $x = y/\lambda$ extracts the volumetric factor $\lambda^{-N/p}$, yielding
	\begin{equation}\label{eq:inflation_bound}
		\|u^{(2)}(t_\lambda, \cdot)\|_{L^p(\mathbb{R}^N)} = A_\lambda^2 \lambda^{-\kappa - \frac{N}{p}} \|F_\lambda\|_{L^p(\mathbb{R}^N)},
	\end{equation}
	where the integral spatial profile is defined as
	\[F_\lambda(y) := \int_0^\tau \nabla K(\tau-s, \lambda) \mathbb{P} \big( K(s, \lambda)\phi \otimes K(s, \lambda)\phi \big)(y) \,ds.\]
		
	As rigorously established in Section 4 and verified uniformly for the modulated family in \cite[Theorem 4.2]{deandrade2026}, the operators $K(s,\lambda)$ are uniformly bounded multipliers in any $L^q(\mathbb{R}^N)$. Given that $\phi \in \mathcal{S}(\mathbb{R}^N)$, we have $K(s, \lambda)\phi \in L^{2p}(\mathbb{R}^N)$, permitting us to evaluate the non-linear tensor directly in the base space $L^p(\mathbb{R}^N)$. Since $p > 1$, the projector $\mathbb{P}$ acts boundedly. Furthermore, as $\lambda \to \infty$, the modulated operator $K(s, \lambda)$ converges strongly to the limiting pure fractional resolvent $S_{\alpha_\infty}(s)$, whose spectral multiplier is governed strictly by the high-frequency exponent $\alpha_\infty$. Lebesgue interpolation thus guarantees the strong tensor convergence 
	\[K(s, \lambda)\phi \otimes K(s, \lambda)\phi \to S_{\alpha_\infty}(s)\phi \otimes S_{\alpha_\infty}(s)\phi\] in $L^p(\mathbb{R}^N)$.
	
	To transfer this tensor convergence to the bilinear profile $F_\lambda$, we evaluate the unbounded spatial gradient $\nabla$. Utilizing the scaling identity $K(s, \lambda) f(y) = [ S(s\lambda^{-\sigma}) f(\lambda \cdot) ] (y/\lambda)$ and applying the bound from Lemma~\ref{lem:short_time_estimates}(ii) with $q = p$, the spatial scale factors exactly cancel, yielding the uniform operator bound
	\begin{equation*}
		\|\nabla K(\tau-s, \lambda) \mathbb{P} f\|_{L^p(\mathbb{R}^N)} \le C (\tau-s)^{-\tilde{\gamma}_{p,p}} \|f\|_{L^p(\mathbb{R}^N)},
	\end{equation*}
	independent of $\lambda \ge 1$. Since the symmetric temporal singularity evaluates to $\tilde{\gamma}_{p,p} = \frac{1+\alpha_\infty}{2} < 1$, the convolution kernel remains  locally integrable over $[0, \tau]$. By the Lebesgue Dominated Convergence Theorem, the spatial profile converges strongly $F_\lambda \to F_\infty$ in $L^p(\mathbb{R}^N)$, where
	\begin{equation*}
		F_\infty(y) = \int_0^\tau \nabla S_{\alpha_\infty}(\tau-s) \mathbb{P} \big( S_{\alpha_\infty}(s)\phi \otimes S_{\alpha_\infty}(s)\phi \big)(y) \,ds.
	\end{equation*}
	
	To bypass the spatial decay failure, we extract the non-triviality of the master profile directly in frequency space. As $\widehat{\phi}$ is explicitly chosen to have compact frequency support, the non-linear tensor $\phi \otimes \phi$ and the resulting modulated bilinear profile $F_\lambda$ identically possess uniform compact frequency support (e.g., confined within a fixed frequency ball $B(0, R)$).
	
	Given that $F_\lambda \to F_\infty$ strongly in $L^p(\mathbb{R}^N)$ and their Fourier spectra are uniformly compactly supported, Bernstein's inequalities guarantee that all $L^q$-norms are equivalent on this spectral subspace. Consequently, the strong $L^p$-convergence implies the uniform convergence of the Fourier transforms $\widehat{F_\lambda} \to \widehat{F_\infty}$ on the compact support $B(0,R)$, where
	\begin{equation*}
		\widehat{F_\infty}(\zeta) = \int_0^\tau i\zeta \widehat{S_{\alpha_\infty}}(\tau-s, \zeta) \cdot \sigma(\mathbb{P})(\zeta) \mathcal{F}\big\{ S_{\alpha_\infty}(s)\phi \otimes S_{\alpha_\infty}(s)\phi \big\}(\zeta) \,ds.
	\end{equation*}
	As $\phi$ was selected to enforce a non-trivial convective projection at frequency $\zeta_0 \neq 0$, the strong continuity of the fractional resolvent near the origin ensures that for a sufficiently small choice of $\tau > 0$, the mass does not self-cancel. Thus, there exists a neighborhood $U$ around $\zeta_0$ and a constant $c_0 > 0$ such that $\|\widehat{F_\infty}\|_{L^\infty(U)} > c_0$. The uniform convergence guarantees that for all sufficiently large $\lambda \ge \lambda_0$, $\|\widehat{F_\lambda}\|_{L^\infty(U)} \ge c_0 / 2$. By the equivalence of norms for compactly supported distributions (or Plancherel's identity over the bounded support), this spectral gap inversely secures a uniform lower bound in the spatial domain given by $\|F_\lambda\|_{L^p(\mathbb{R}^N)} \ge c_1 > 0$ for all $\lambda \ge \lambda_0$.
	
	Returning to the scaling identity \eqref{eq:inflation_bound}, we select the amplitude $A_\lambda = \varepsilon \lambda^{\frac{N}{p} - \delta}(\ln \lambda)^{-1}$, where the geometric deficit is parameterized as $\delta = \frac{1}{4}\left(\frac{N}{p} - \kappa\right)$. In the supercritical regime $p < p_c$, the relation $p_c = N/\kappa$ inherently enforces $\frac{N}{p} - \kappa > 0$, ensuring $\delta > 0$. The initial energy scales as $\|u_{0,\lambda}\|_{L^p} = \varepsilon \lambda^{-\delta} (\ln \lambda)^{-1}$, which vanishes as $\lambda \to \infty$. Conversely, the energy of the bilinear cascade becomes
	\begin{equation*}
		\|u^{(2)}(t_\lambda)\|_{L^p} = \varepsilon^2 (\ln \lambda)^{-2} \lambda^{\frac{2N}{p} - 2\delta} \lambda^{-\kappa - \frac{N}{p}} \|F_\lambda\|_{L^p} = \varepsilon^2 (\ln \lambda)^{-2} \lambda^{\frac{N}{p} - \kappa - 2\delta} \|F_\lambda\|_{L^p}.
	\end{equation*}
	By our choice of $\delta$, we have $\frac{N}{p} - \kappa - 2\delta = \frac{1}{2}\left(\frac{N}{p} - \kappa\right) > 0$.
	
	Defining the sequence $\lambda_k \to \infty$ and setting $t_k = \tau \lambda_k^{-\sigma}$, we observe $t_k \to 0^+$. The initial norms satisfy $\|u_{0,k}\|_{L^p} \to 0$, yet the bilinear response evaluates to
	\begin{equation*}
		\|u^{(2)}_k(t_k)\|_{L^p} \ge c_1 \varepsilon^2 (\ln \lambda_k)^{-2} \lambda_k^{\frac{1}{2}\left(\frac{N}{p} - \kappa\right)} \xrightarrow{k \to \infty} \infty.
	\end{equation*}
	This divergence of the bilinear iterate contradicts the uniform local continuity of the data-to-solution map, rendering the Cauchy problem ill-posed at the origin.
\end{proof}
	
\section{Well-posedness in the critical limit}

As established in Theorem~\ref{thm:local_well_posedness}, the critical Lebesgue exponent for the local well-posedness of the NSHV equations is $p_c = N/\kappa$, where the geometric deficit parameter is given by $\kappa = \frac{1-\alpha_\infty}{1+\alpha_\infty}$. By virtue of the standard Besov embedding scale $s - N/p = -\kappa$, extending this criticality to the endpoint $p = \infty$ naturally identifies $\dot{B}^{-\kappa}_{\infty, \infty}(\mathbb{R}^N)$ as the ultimate critical space for the existence theory. 

In the classical Navier-Stokes framework ($\alpha_\infty \to 0, \kappa \to 1$), it is well known since the seminal work of Bourgain and Pavlovi\'c \cite{bourgain2008} that the flow is intrinsically ill-posed in $\dot{B}^{-1}_{\infty, \infty}(\mathbb{R}^N)$. The classical mechanism dictating this topological collapse relies on a low-frequency resonance, wherein high-frequency convective interactions continuously project energy near the frequency origin, thereby triggering an instantaneous norm inflation.

Before establishing the main result, it is instructive to ascertain how the dual-scale memory of the NSHV model structurally prevents this specific ill-posedness mechanism. First, we verify the conservation of the macroscopic zero-mode for the linear resolvent. Since the memory kernel satisfies the strict sectorial condition (H1), the characteristic denominator $\lambda + |\xi|^2 \hat{g}(\lambda)$ is uniformly bounded away from zero on any deformed Hankel contour $\Gamma$ defined by an angle $\theta \in (\pi/2, \theta_\infty)$. Applying the Lebesgue Dominated Convergence Theorem as $|\xi| \to 0$ isolates the simple pole at the origin within the Bromwich integral, yielding
\begin{equation*}
	\lim_{|\xi| \to 0} \widehat{S}(t, \xi) = \frac{1}{2\pi i} \int_{\Gamma} \frac{e^{\lambda t}}{\lambda} \,d\lambda = 1.
\end{equation*}
Although this confirms that the non-local dissipation geometrically preserves the macroscopic zero-mode, the non-linear transfer of energy into the low-frequency regime is severely damped by the non-local memory, as formalized below.

\begin{remark}[Stability of the low-frequency interactions]\label{rem:low_freq_failure}
	Analyzing the low-frequency projection of the bilinear interaction of a high-frequency block $u_{0,m}$ with itself reveals that the convective transfer toward the frequency origin is strictly regulated. Let $\Delta_{\le 0} := \sum_{j \le 0} \Delta_j$ denote the standard low-frequency cut-off operator (see Lemma \ref{lem:bernstein}). Utilizing Bernstein's inequality on spectral balls, this projector absorbs the derivative operator, yielding the uniform bound $\|\nabla \Delta_{\le 0} f \|_{L^\infty} \lesssim \|f\|_{L^\infty}$. Evaluating the bilinear form $B(u_{0,m}, u_{0,m})$ at the characteristic lifespan $t_m \simeq \lambda_m^{-\frac{2}{1+\alpha_\infty}}$ provides the asymptotic estimate
	\begin{equation*}
		\|\Delta_{\le 0} B(u_{0,m}, u_{0,m})(t_m)\|_{L^\infty} \lesssim \|u_{0,m}\|_{L^\infty}^2 t_m \simeq (\lambda_m^\kappa)^2 \lambda_m^{-\frac{2}{1+\alpha_\infty}} = \lambda_m^{-\frac{2\alpha_\infty}{1+\alpha_\infty}}.
	\end{equation*}
	Since $\alpha_\infty \in (0,1)$, this resulting exponent is strictly negative. Consequently, the summation of these low-frequency contributions is absolutely convergent, generating only a bounded perturbation. Thus, the dual-scale memory imposes sufficiently strong dissipation upon the high-frequency components to dampen the convective cascade before it can inflate the low-frequency Besov norm.
\end{remark}

Given the stability presented in Remark~\ref{rem:low_freq_failure}, the existence of a continuous flow is governed exclusively by the forward control of high-frequency interactions. To rigorously establish that the regularizing effect of the non-local memory suppresses the convective growth at high frequencies, we deploy Bony's paraproduct decomposition within a modified Kato-type smoothing functions space, thereby establishing global well-posedness in the critical homogeneous topology.

Let $\epsilon \in (\kappa, 1)$ act as a sufficiently small structural parameter. We define the fractional time-decay exponent $\gamma = \frac{1+\alpha_\infty}{2}(\kappa + \epsilon)$. Given that $\kappa = \frac{1-\alpha_\infty}{1+\alpha_\infty}$, we verify that for any $\epsilon < 1$, the temporal singularity remains locally integrable:
\begin{equation}\label{eq:gamma_bound}
	\gamma = \frac{1+\alpha_\infty}{2}\left( \frac{1-\alpha_\infty}{1+\alpha_\infty} + \epsilon \right) = \frac{1-\alpha_\infty}{2} + \frac{1+\alpha_\infty}{2}\epsilon < \frac{1-\alpha_\infty}{2} + \frac{1+\alpha_\infty}{2} = 1.
\end{equation}

To accommodate the dual-scale nature of the resolvent---which fundamentally lacks a global self-similar scaling profile---we preserve the homogeneous Besov space $\dot{B}^{-\kappa}_{\infty, \infty}(\mathbb{R}^N)$ as the base topology to respect the critical scaling limit, while isolating the temporal smoothing effect strictly within the high-frequency regime. To rigorously bypass the low-frequency separability restriction, we introduce the asymmetric little Besov space $\dot{b}^{\epsilon, +}_{\infty, \infty}(\mathbb{R}^N)$, defined as the subspace of distributions $f \in \dot{B}^{\epsilon}_{\infty, \infty}(\mathbb{R}^N)$ satisfying the high-frequency adherence condition
\begin{equation}\label{eq:asymmetric_adherence}
	\lim_{j \to +\infty} 2^{j\epsilon} \|\Delta_j f\|_{L^\infty} = 0.
\end{equation}
Associated with this space, we define the high-frequency semi-norm
\begin{equation*}
	\|u\|_{\dot{b}^{\epsilon, +}_{\infty, \infty}} = \sup_{j \ge 0} 2^{j\epsilon} \|\Delta_j u\|_{L^\infty(\mathbb{R}^N)}.
\end{equation*}
To capture the initial data adherence, we define the solution space $\mathcal{X}$ as the class of continuous functions $u \in C([0, \infty); \dot{B}^{-\kappa}_{\infty, \infty}(\mathbb{R}^N))$ whose high-frequency components satisfy the temporal adherence condition $\lim_{t \to 0^+} t^\gamma \|u(t)\|_{\dot{b}^{\epsilon, +}_{\infty, \infty}} = 0$. The space $\mathcal{X}$ is naturally equipped with the composite norm
\begin{equation*}
	\|u\|_{\mathcal{X}} = \sup_{t \ge 0} \|u(t)\|_{\dot{B}^{-\kappa}_{\infty, \infty}} + \sup_{t>0} t^{\gamma} \|u(t)\|_{\dot{b}^{\epsilon, +}_{\infty, \infty}}.
\end{equation*}

\begin{lema}\label{lem:bilinear_convergence}
	Consider the bilinear operator
	\[
	B(u, v)(t) = \int_0^t S(t-s) \mathbb{P} \nabla \cdot (u(s) \otimes v(s)) \,ds.
	\]
	There exists a universal constant $C > 0$ such that for all $u, v \in \mathcal{X}$, the dyadic blocks satisfy
	\begin{equation*}
		\sup_{t>0} t^\gamma \|B(u,v)(t)\|_{\dot{b}^{\epsilon, +}_{\infty, \infty}} + \sup_{t>0} \|B(u,v)(t)\|_{\dot{B}^{-\kappa}_{\infty, \infty}} \le C \|u\|_{\mathcal{X}} \|v\|_{\mathcal{X}}.
	\end{equation*}
\end{lema}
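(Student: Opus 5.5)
The plan is to work block by block. I would first establish a dyadically localized version of the linear estimates of Lemma~\ref{lem:short_time_estimates}: for every $j$ and $t>0$,
\begin{equation*}
\|\Delta_j \nabla S(t)\mathbb{P} f\|_{L^\infty} \le C\, 2^{j} e^{-c\, t\, 2^{j\sigma}} \|\Delta_j f\|_{L^\infty}, \qquad \sigma = \tfrac{2}{1+\alpha_\infty} = 1+\kappa,
\end{equation*}
for $j \ge 0$, with the exponent $\sigma$ replaced by $\gamma_0 = \frac{2}{1+\alpha_0}$ for $j<0$ and $t$ large, as in the proof of Lemma~\ref{lem:long_time_estimates}. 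This combines three ingredients: the scaled symbol $\tilde\Psi \in S^{-2}_{1,0}$ restricted to an annulus, the uniform $L^\infty$ boundedness of $\mathbb{P}\Delta_j$ from Subsection~\ref{Fourier multipliers and the Leray-Hopf projector}, and Bernstein's inequality (Lemma~\ref{lem:bernstein}(ii)) for the gradient. The exponential factor should come from the sectorial pole location in (H1)--(H2), i.e.\ $\operatorname{Re}\lambda \lesssim -|\xi|^{\sigma}$ at high frequency. With this in hand, $\Delta_j B(u,v)(t)$ is bounded by $\int_0^t 2^j e^{-c(t-s)2^{j\sigma}}\|\Delta_j(u\otimes v)(s)\|_{L^\infty}\,ds$, so everything reduces to estimating $\|\Delta_j(u\otimes v)(s)\|_{L^\infty}$.

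For $j \ge 0$ (the high-frequency semi-norm) I would use Bony's decomposition from Subsection~\ref{paraproduct} asymmetrically.
\begin{itemize}
\item[(a)] For $T_u v$: $\|S_{k-1}u\|_{L^\infty} \lesssim \sum_{m<k} 2^{m\kappa}\|u\|_{\dot B^{-\kappa}_{\infty,\infty}} \lesssim 2^{k\kappa}\|u\|_{\mathcal X}$, which converges since $\kappa>0$. Putting $\Delta_k v$ in $\dot b^{\epsilon,+}$ gives $\|\Delta_j T_u v\|_{L^\infty} \lesssim 2^{j(\kappa-\epsilon)} s^{-\gamma}\|u\|_{\mathcal X}\|v\|_{\mathcal X}$. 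The term $T_v u$ is treated symmetrically.
\item[(b)] For $R(u,v)$: put $\Delta_k u$ in $\dot B^{-\kappa}$ (factor $2^{k\kappa}$) and $\widetilde\Delta_k v$ in $\dot b^{\epsilon,+}$ (factor $2^{-k\epsilon}s^{-\gamma}$). Summing over $k \ge j-3$ converges precisely because $\epsilon>\kappa$, and again gives $2^{j(\kappa-\epsilon)}s^{-\gamma}$. This asymmetric split is what avoids the non-integrable singularity $s^{-2\gamma}$.
\end{itemize}
The finitely many indices $k\in\{-3,\dots,-1\}$ are harmless, by Bernstein. Inserting this into the time integral, and using $1+\kappa-\sigma=0$, I would split $s<t/2$ and $s>t/2$. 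On $s>t/2$ one gets
\begin{equation*}
2^{j(1+\kappa-\epsilon)}\, t^{-\gamma} \int e^{-c(t-s)2^{j\sigma}}\,ds \lesssim 2^{-j\epsilon} t^{-\gamma}.
\end{equation*}
On $s<t/2$, the factor $e^{-ct2^{j\sigma}/2}2^{j\sigma}t$ is bounded, and $\int_0^{t/2}s^{-\gamma}\,ds \sim t^{1-\gamma}$ is finite because $\gamma<1$ by \eqref{eq:gamma_bound}. Together these give the weighted bound $t^\gamma\|B(u,v)(t)\|_{\dot b^{\epsilon,+}_{\infty,\infty}} \lesssim \|u\|_{\mathcal X}\|v\|_{\mathcal X}$.

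For the $\dot B^{-\kappa}_{\infty,\infty}$ part, the high blocks $j\ge0$ follow from the previous step, since $2^{-j\kappa}\le 2^{j\epsilon}$ and the same integrals appear with a better weight. The main obstacle is the low blocks $j<0$, uniformly in $t$, where the product of two $\dot B^{-\kappa}$ functions is not a priori defined. My plan here is to exploit the structure of Bony's decomposition. For $j<0$ the paraproducts involve only $k\approx j$, while the resonant sum over $k\ge j-3$ splits into $k\ge0$, controlled via the $\dot b^{\epsilon,+}$ weight as above, and $j-3\le k<0$, a finite geometric range. On the latter I would use the factor $2^{j}$ from the gradient together with the long-time decay $e^{-c(t-s)2^{j\gamma_0}}$ to absorb the $2^{2k\kappa}$ growth, in the spirit of Remark~\ref{rem:low_freq_failure}. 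Closing this without a time-weight on low frequencies is the step I expect to require the most care. If it fails, I would fall back on interpolating $\|\Delta_k u\|_{L^\infty}$ between the two components of $\|\cdot\|_{\mathcal X}$. Finally, I would check the adherence $t^\gamma\|B(u,v)(t)\|_{\dot b^{\epsilon,+}}\to0$ by repeating the estimate on $(0,t)$ with the sup restricted to $(0,t)$, and continuity in time by dominated convergence as in Theorem~\ref{thm:local_well_posedness}.
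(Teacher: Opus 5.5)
\textbf{Verdict: there is a genuine gap, and it cannot be closed with $\mathcal{X}$ as defined.} For output blocks $j\ge 0$ fed by inputs $k\ge 0$, your argument is essentially the paper's. Both use the same asymmetric allocation in Bony's decomposition followed by a time convolution; the paper rescales to a dimensionless Mittag-Leffler integral where you split at $s=t/2$.

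\textbf{The open step does not close.} The gap is the step you leave open: the blocks $j<0$ of the $\dot{B}^{-\kappa}_{\infty,\infty}$-norm, uniformly in $t>0$. The mechanism you sketch cannot handle it.
For $j-3\le k<0$ both factors are controlled only by the base norm. The seminorm $\|\cdot\|_{\dot{b}^{\epsilon,+}_{\infty,\infty}}$ sees only $k\ge 0$, so your interpolation fallback has nothing to interpolate with. Moreover $\sum_{j-3\le k<0}2^{2k\kappa}\simeq 1$ is dominated by $k$ near $0$, so nothing improves as $j\to-\infty$.
At $|\xi|\sim 2^j$ the damping gives at best $\sup_t\int_0^t 2^j|\widehat{S}(\tau,\xi)|\,d\tau\lesssim 2^{j(1-\gamma_0)}$. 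After the weight $2^{-j\kappa}$ you are left with $2^{-j(\gamma_0-1+\kappa)}$, which diverges as $j\to-\infty$ because $\gamma_0=\frac{2}{1+\alpha_0}>1$.

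\textbf{The loss is real.} Let $u=v\equiv u_*$ be constant in time, with $u_*$ a divergence-free sum of two plane waves. Take frequencies $\omega,\omega'$ of modulus about $1/2$, so $\Delta_m u_*=0$ for $m\ge 0$ and $\|u\|_{\mathcal{X}}\lesssim 1$ uniformly. Let $\eta=\omega-\omega'$ satisfy $|\eta|\sim 2^j$ and sit at a generic angle.
Then $\Delta_jB(u,u)(t)$ equals $\int_0^t\widehat{S}(\tau,\eta)\,d\tau$ times a time-independent plane wave of amplitude $\sim 2^j$. The low-frequency bound used in the proof of Theorem~\ref{thm:global_besov_limit} gives $|\widehat{S}(\tau,\eta)-1|\lesssim\tau^{1+\alpha_0}|\eta|^2\le\frac12$ for $\tau\le c\,2^{-j\gamma_0}$. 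Hence $\sup_{t>0}\|B(u,u)(t)\|_{\dot{B}^{-\kappa}_{\infty,\infty}}\gtrsim 2^{-j(\gamma_0-1+\kappa)}\to\infty$.
So $\mathcal{X}$, which puts no time weight on frequencies $\lesssim 1$, admits no universal $C$. The bound can hold only in modified form, for example on a finite horizon with $C=C(T)$, or with a norm that also controls the low blocks in time.

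\textbf{The paper does not fill this gap.} For $j<0$ it keeps only the resonant sum over $k\ge 0$, sets $\widehat{S}\approx 1$, and obtains $2^{j(1-\kappa)}t^{1-\gamma}$, which is not uniform in $t$. It never treats the resonant terms with $k<0$ or the low paraproducts, which is exactly where the example above lives.

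\textbf{Two further points in your high-frequency step.}
First, the indices $k<0$ entering the blocks $j\in\{0,\dots,3\}$ are not ``harmless by Bernstein'' for a $\sup_{t>0}$ bound. They carry no factor $s^{-\gamma}$ and contribute of order $t^{\gamma}\bigl|\int_0^t\widehat{S}(\tau,\xi)\,d\tau\bigr|$ at $|\xi|\sim 1$. This is unbounded as $t\to\infty$, e.g.\ for the kernel of Example 3, where the integral tends to $(|\xi|^2\hat{g}(0))^{-1}\ne 0$.
Second, the factor $e^{-ct2^{j\sigma}}$ is not available. The branch cut of $\hat{g}$ on $(-\infty,0]$ leaves only the algebraic Mittag-Leffler tail $|\widehat{S}(t,\xi)|\lesssim(1+|\xi|^2t^{1+\alpha_\infty})^{-1}$, which is what the paper uses. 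Your $t/2$-splitting still works with this bound because $1+\alpha_\infty>1$.
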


\begin{proof}
	We must control the spatial amplitude of the bilinear term across each dyadic scale $j \in \mathbb{Z}$. By means of Bony's paraproduct decomposition (cf. Subsection \ref{paraproduct}), the nonlinear convective tensor $u \otimes v$ is split into three distinct spectral interactions: $T_u v + T_v u + R(u,v)$. Since the base topology $\dot{B}^{-\kappa}_{\infty, \infty}$ exhibits negative regularity ($\kappa > 0$), both the cross-frequency paraproducts and the high-high residue term dictate the maximal regularity limit.
	
	Let us analyze the terms projected onto the $j$-th block for $j \ge 0$. For the paraproduct $T_u v = \sum_k S_{k-1} u \Delta_k v$, the standard dyadic localization property dictates that $\Delta_j(T_u v) \approx S_{j-1}u \Delta_j v$. Leveraging the base topology to control the low-frequency accumulation of $u$, the partial sum scales geometrically, yielding
	\begin{equation*}
		\|S_{j-1} u(s)\|_{L^\infty} \le \sum_{m \le j-1} \|\Delta_m u(s)\|_{L^\infty} \lesssim \sum_{m \le j-1} 2^{m\kappa} \|u(s)\|_{\dot{B}^{-\kappa}_{\infty, \infty}} \lesssim 2^{j\kappa} \|u\|_{\mathcal{X}}.
	\end{equation*}
	Coupling this estimate with the high-frequency smoothing norm for $\Delta_j v(s)$ provides the explicit uniform bound for the paraproduct:
	\begin{equation}\label{eq:paraproduct_bound}
		\|\Delta_j (T_u v)(s)\|_{L^\infty} \lesssim \big(2^{j\kappa} \|u\|_{\mathcal{X}}\big) \big(2^{-j\epsilon} s^{-\gamma} \|v\|_{\mathcal{X}}\big) = 2^{-j(\epsilon-\kappa)} s^{-\gamma} \|u\|_{\mathcal{X}} \|v\|_{\mathcal{X}}.
	\end{equation}
	Symmetrically, the interaction $\Delta_j(T_v u)$ satisfies the exact same bound. For the high-high residue term $R_j(s) = \Delta_j \sum_{k \ge j-2} \Delta_k u(s) \widetilde{\Delta}_k v(s)$, we interpolate the product using the smoothing semi-norm for $u$ and the base topology for $v$. Given that $\epsilon > \kappa$, the resulting series converges geometrically
	\begin{equation}\label{eq:Rj_bound}
		\|R_j(s)\|_{L^\infty} \le \sum_{k \ge j-2} \big(2^{-k\epsilon} s^{-\gamma} \|u\|_{\mathcal{X}}\big) \big(2^{k\kappa} \|v\|_{\mathcal{X}}\big) \lesssim 2^{-j(\epsilon-\kappa)} s^{-\gamma} \|u\|_{\mathcal{X}} \|v\|_{\mathcal{X}}.
	\end{equation}
	This asymmetric interpolation---assigning the high-frequency smoothing norm to only one of the factors---bypasses the temporal singularity $s^{-2\gamma}$. Since $2\gamma > 1$ is structurally possible within our parameter range, a symmetric allocation of weights would render the subsequent Duhamel integral non-integrable at the origin, thereby collapsing the contraction argument. Thus, all non-linear spectral interactions evaluated at high frequencies $j \ge 0$ are uniformly bounded by $C 2^{-j(\epsilon-\kappa)} s^{-\gamma} \|u\|_{\mathcal{X}} \|v\|_{\mathcal{X}}$.
	
	Proceeding to the Duhamel integration over the dyadic annulus $\mathcal{C}_j := \{\xi \in \mathbb{R}^N : \frac{3}{4} 2^j \le |\xi| \le \frac{8}{3} 2^j\}$, we encounter the Leray-Hopf projector $\mathbb{P}$. While $\mathbb{P}$ is an unbounded singular integral in global $L^\infty(\mathbb{R}^N)$, its localized Fourier symbol $\sigma(\mathbb{P})_{mn} \psi_j(\xi)$ is smooth and compactly supported away from the origin (see Subsection \ref{Fourier multipliers and the Leray-Hopf projector}). By Paley-Wiener theory, its inverse Fourier transform belongs to $L^1(\mathbb{R}^N)$ with a strictly scale-invariant norm. Therefore, the composite operator $\mathbb{P}\Delta_j$ maps $L^\infty$ to $L^\infty$ uniformly on each annulus $\mathcal{C}_j$, securing the spatial derivative multiplier bound $\|\mathbb{P} \nabla \Delta_j \cdot\|_{L^\infty} \lesssim 2^j$.
	
	Exploiting the asymptotic decay of the resolvent for the high-frequency regime, the spectral multiplier is bounded by the Mittag-Leffler profile $\big\| \widehat{S}(t-s, \cdot) \big\|_{L^\infty(\mathcal{C}_j)} \lesssim E_{1+\alpha_\infty}(-c 2^{2j}(t-s)^{1+\alpha_\infty})$. Combining this with \eqref{eq:paraproduct_bound} and \eqref{eq:Rj_bound}, the temporal evolution of the $j$-th block amplitude is governed by the singular convolution
	\begin{equation*}
		I_j(t) = \int_0^t 2^j E_{1+\alpha_\infty}\big(-c 2^{2j}(t-s)^{1+\alpha_\infty}\big) s^{-\gamma} \,ds.
	\end{equation*}
	Introducing the dimensionless variables $\tau = 2^{\frac{2j}{1+\alpha_\infty}} s$ and $\tau' = 2^{\frac{2j}{1+\alpha_\infty}} t$ reveals the intrinsic temporal scaling. Factoring out the powers of $2^j$, the integral becomes
	\begin{equation*}
		I_j(t) = 2^{j\big(1 - \frac{2}{1+\alpha_\infty} + \frac{2\gamma}{1+\alpha_\infty}\big)} \int_0^{\tau'} E_{1+\alpha_\infty}(-c(\tau'-\tau)^{1+\alpha_\infty}) \tau^{-\gamma} \,d\tau.
	\end{equation*}
	By invoking the identity $1 - \frac{2}{1+\alpha_\infty} = -\kappa$ and recalling that $\frac{2\gamma}{1+\alpha_\infty} = \kappa + \epsilon$, the dyadic prefactor reduces exactly to $2^{j\epsilon}$. Let $F(\tau')$ denote the dimensionless integral. Since $\gamma < 1$, the convolution yields a bounded function exhibiting an asymptotic decay rate of $\mathcal{O}\big((\tau')^{-\gamma}\big)$ as $\tau' \to \infty$.
	
	Combining the spatial prefactors with the evaluated integral, the total amplitude is bounded by
	\begin{equation*}
		\|\Delta_j B(u, v)(t)\|_{L^\infty} \lesssim 2^{-j(\epsilon-\kappa)} \times \big[ 2^{j\epsilon} F(\tau') \big] \|u\|_{\mathcal{X}} \|v\|_{\mathcal{X}} = 2^{j\kappa} F(\tau') \|u\|_{\mathcal{X}} \|v\|_{\mathcal{X}}.
	\end{equation*}
	Applying the uniform bound $F(\tau') \le C$ verifies the base homogeneous norm constraint $\|\Delta_j B(u, v)(t)\|_{L^\infty} \lesssim 2^{j\kappa} \|u\|_{\mathcal{X}} \|v\|_{\mathcal{X}}$. Furthermore, tracking the asymptotic decay $F(\tau') \lesssim (\tau')^{-\gamma} = (2^{\frac{2j}{1+\alpha_\infty}} t)^{-\gamma} = 2^{-j(\kappa+\epsilon)} t^{-\gamma}$ yields the fractional temporal smoothing bound
	\begin{equation*}
		\|\Delta_j B(u, v)(t)\|_{L^\infty} \lesssim 2^{j\kappa} \big(2^{-j(\kappa+\epsilon)} t^{-\gamma}\big) \|u\|_{\mathcal{X}} \|v\|_{\mathcal{X}} = 2^{-j\epsilon} t^{-\gamma} \|u\|_{\mathcal{X}} \|v\|_{\mathcal{X}}.
	\end{equation*}
	This formulation isolates the time singularity and guarantees boundedness within the weighted semi-norm $t^\gamma \dot{b}^{\epsilon, +}_{\infty, \infty}$, concluding the derivation of the high-frequency bilinear bound.
	
	For the low-frequency outputs ($j < 0$), the analysis demands delicate care, since $u, v \in \dot{B}^{-\kappa}_{\infty, \infty}$ possess negative regularity, rendering classical pointwise multiplication ill-defined. The projection of high-high frequency interactions into the macroscopic modes generates the spectral series
	\[
	R_j(s) = \Delta_j \sum_{k \ge 0} \Delta_k u \widetilde{\Delta}_k v.
	\]
	Without the fractional temporal smoothing, a direct norm estimate would encounter the divergent series $\sum_{k \ge 0} 2^{2k\kappa} = \infty$. However, by exploiting the asymmetric interpolation constructed previously, the sum is rendered absolutely convergent and bounded by $C s^{-\gamma} \|u\|_{\mathcal{X}} \|v\|_{\mathcal{X}}$. Applying the spatial gradient geometrically yields a multiplier of order $2^j$. Since the resolvent  lacks high-frequency decay within this regime ($\widehat{S} \approx 1$), the temporal convolution is bounded by $\int_0^t 2^j s^{-\gamma} \,ds \lesssim 2^j t^{1-\gamma}$. Multiplying by the base Besov weight $2^{-j\kappa}$, the total amplitude is governed by $2^{j(1-\kappa)} t^{1-\gamma}$. As the geometric deficit parameter satisfies $\kappa < 1$, the exponent $1-\kappa$ remains strictly positive, forcing the low-frequency tail to decay geometrically as $j \to -\infty$. This geometric decay neutralizes the singular accumulation, thereby securing the global boundedness in the base topology $\dot{B}^{-\kappa}_{\infty, \infty}$ and confirming the absolute continuity of the Volterra integral. This completes the proof.
\end{proof}

\begin{theorem}[Global well-posedness in the maximal critical space]\label{thm:global_besov_limit}
	Let $N \ge 2$. There exists a universal constant $\delta > 0$ such that for any initial datum $u_0 \in \dot{B}^{-\kappa}_{\infty, \infty}(\mathbb{R}^N)$ satisfying $\nabla \cdot u_0 = 0$, the high-frequency adherence condition
	\begin{equation}\label{eq:little_besov_cond}
		\lim_{j \to +\infty} 2^{-j\kappa} \|\Delta_j u_0\|_{L^\infty} = 0,
	\end{equation}
	and the smallness threshold $\|u_0\|_{\dot{B}^{-\kappa}_{\infty, \infty}} \le \delta$, the NSHV equations admit a unique global-in-time mild solution $u \in \mathcal{X}$. Furthermore, the data-to-solution map $u_0 \mapsto u$ is Lipschitz continuous, establishing global well-posedness in the sense of Hadamard.
\end{theorem}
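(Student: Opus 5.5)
The plan is a contraction-mapping argument for $\Phi(u)(t) = S(t)u_0 - B(u,u)(t)$ in the space $\mathcal{X}$. Lemma~\ref{lem:bilinear_convergence} already supplies the bilinear control, so the new work is the linear part and the closing argument.

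\textbf{Step 1: the free evolution.} We show that $S(\cdot)u_0 \in \mathcal{X}$ with $\|S(\cdot)u_0\|_{\mathcal{X}} \le C_0 \|u_0\|_{\dot{B}^{-\kappa}_{\infty,\infty}}$. For the base norm, we apply the dyadic multiplier bounds on $\widehat{S}(t,\cdot)$ restricted to $\mathcal{C}_j$. These give $\|\Delta_j S(t)u_0\|_{L^\infty} \lesssim \|\Delta_j u_0\|_{L^\infty}$ uniformly in $t$ and $j$. This follows from the $S^{-2}_{1,0}$ structure of Lemma~\ref{lem:short_time_estimates}, or at low frequencies from the long-time analysis of Lemma~\ref{lem:long_time_estimates}.

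For the weighted high-frequency seminorm, we use the Mittag-Leffler decay on $\mathcal{C}_j$, $j\ge 0$:
\[
t^\gamma 2^{j\epsilon}\|\Delta_j S(t)u_0\|_{L^\infty} \lesssim t^\gamma 2^{j(\epsilon+\kappa)} E_{1+\alpha_\infty}\bigl(-c2^{2j}t^{1+\alpha_\infty}\bigr)\, 2^{-j\kappa}\|\Delta_j u_0\|_{L^\infty}.
\]
Set $\tau' = 2^{2j/(1+\alpha_\infty)}t$. Since $2\gamma/(1+\alpha_\infty) = \kappa+\epsilon$, the prefactor is $(\tau')^{\gamma}E(-c(\tau')^{1+\alpha_\infty})$. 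This is bounded because the Mittag-Leffler function decays like $(\tau')^{-(1+\alpha_\infty)}$ and $\gamma<1<1+\alpha_\infty$. The bound gives the estimate.

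\textbf{Step 2: the adherence $t^\gamma\|S(t)u_0\|_{\dot b^{\epsilon,+}}\to 0$ as $t\to0^+$.} This is where \eqref{eq:little_besov_cond} enters, and I expect it to be the delicate point. Given $\eta>0$, pick $J$ with $2^{-j\kappa}\|\Delta_j u_0\|_{L^\infty}<\eta$ for $j>J$. The indices $j>J$ contribute at most $C\eta$ by Step 1. The finitely many indices $0\le j\le J$ contribute at most $C t^\gamma 2^{J(\epsilon+\kappa)}\|u_0\|_{\dot B^{-\kappa}_{\infty,\infty}}$, which tends to $0$.

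Continuity of $t\mapsto S(t)u_0$ in $\dot B^{-\kappa}_{\infty,\infty}$ is handled the same way. The high frequencies are small uniformly by \eqref{eq:little_besov_cond}; the finitely many remaining blocks converge by dominated convergence in the Bromwich representation. Only the lowest frequencies need care, and there we use that $\widehat S(t,\xi)\to1$ as $|\xi|\to0$ uniformly for bounded $t$, as computed before Remark~\ref{rem:low_freq_failure}. I would first try to show that this limit holds uniformly enough to control $\sup_{j<0}$. If it does not, I would weaken continuity at $t=0$ to weak-$*$ continuity, which is the usual convention in $\dot B^{-\kappa}_{\infty,\infty}$.

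\textbf{Step 3: $B(u,v)$ has the same adherence and continuity.} For $u,v\in\mathcal{X}$, I repeat the computation of Lemma~\ref{lem:bilinear_convergence}. In the bound for the $j$-th block of the paraproduct and residue terms, the weight $s^{-\gamma}\|v\|_{\mathcal X}$ is replaced by $s^{-\gamma}\omega(s)$, where $\omega(s)=\sup_{0<\sigma\le s}\sigma^\gamma\|v(\sigma)\|_{\dot b^{\epsilon,+}}\to0$. The Duhamel integral then yields $t^\gamma\|B(u,v)(t)\|_{\dot b^{\epsilon,+}}\lesssim \omega(t)\|u\|_{\mathcal X}\to0$. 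The asymmetric allocation in Lemma~\ref{lem:bilinear_convergence} always places the smoothing norm on one factor, so this substitution costs nothing. Time continuity follows from splitting $B(t_2)-B(t_1)$ as in Theorem~\ref{thm:local_well_posedness}(i), with the integrable singularity $s^{-\gamma}$, $\gamma<1$, from \eqref{eq:gamma_bound}. Hence $\Phi:\mathcal X\to\mathcal X$ is well defined.

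\textbf{Step 4: closing the argument.} Let $C$ be the constant of Lemma~\ref{lem:bilinear_convergence}, take $R=2C_0\|u_0\|_{\dot{B}^{-\kappa}_{\infty,\infty}}$, and work on the ball $\mathcal B_R\subset\mathcal X$. We get $\|\Phi(u)\|_{\mathcal X}\le R/2+CR^2$ and $\|\Phi(u)-\Phi(v)\|_{\mathcal X}\le 2CR\|u-v\|_{\mathcal X}$, using the identity $u\otimes u-v\otimes v=u\otimes(u-v)+(u-v)\otimes v$. The choice $\delta=(8C_0C)^{-1}$ makes $\Phi$ a contraction on $\mathcal B_R$, which gives existence.

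Uniqueness in all of $\mathcal X$ follows as in Theorem~\ref{thm:local_well_posedness}(ii): the adherence condition makes the truncated $\mathcal X_{T_0}$-norms small, and one then continues in time. Divergence-freeness is preserved since $\mathbb P$ and $S(t)$ commute. The Lipschitz bound $\|u-v\|_{\mathcal X}\le 2C_0\|u_0-v_0\|_{\dot B^{-\kappa}_{\infty,\infty}}$ comes from absorbing $2CR\le 1/2$, which completes Hadamard well-posedness.
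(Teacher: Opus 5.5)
Your proposal is correct and follows essentially the paper's route: uniform dyadic multiplier bounds plus high-frequency decay of $\widehat{S}$ for the free evolution, adherence and strong continuity by splitting at a large dyadic index via \eqref{eq:little_besov_cond}, contraction on $\mathcal{B}_R$ through Lemma~\ref{lem:bilinear_convergence}, uniqueness by truncation near $t=0$, and Lipschitz dependence by absorption; your Step 3, which uses the modulus $\omega$ to show that $B(u,v)$ inherits the adherence, makes explicit a point the paper leaves implicit. Two small corrections: the weak-$*$ fallback is unnecessary, and it would not give $u\in\mathcal{X}$, because integrating the symbol's Volterra equation $\partial_t\widehat{S}=-|\xi|^2\,g*\widehat{S}$, $\widehat{S}(0,\xi)=1$, with $|\widehat{S}|\le C$ yields $|\widehat{S}(t,\xi)-1|\le C|\xi|^2\int_0^t\int_0^s|g(r)|\,dr\,ds$, which is the same kind of quadratic low-frequency control the paper uses; and in the uniqueness step only the weighted part of the truncated norm becomes small, while the base part merely approaches $\|u_0\|_{\dot{B}^{-\kappa}_{\infty,\infty}}\le\delta$, so it is the smallness of $\delta$, as in the paper, that closes the estimate.
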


\begin{proof}
	We first establish that the linear evolution $u_L(t) = S(t)u_0$ belongs to the space $\mathcal{X}$. Since the non-local memory kernel restricts the symbol globally such that $\sup_{t>0, \xi \in \mathbb{R}^N} |\widehat{S}(t, \xi)| \le C$, classical Fourier multiplier theorems ensure that the operator $S(t)$ maps $L^\infty(\mathbb{R}^N)$ to itself uniformly across all dyadic blocks, preserving the homogeneous base topology
	\begin{equation*}
		\sup_{t>0} \|u_L(t)\|_{\dot{B}^{-\kappa}_{\infty, \infty}} \le C_1 \|u_0\|_{\dot{B}^{-\kappa}_{\infty, \infty}}.
	\end{equation*}
	
	Next, we extract the temporal smoothing effect exclusively within the high-frequency regime ($j \ge 0$). On the dyadic annulus $\mathcal{C}_j$, the algebraic tail of the Mittag-Leffler profile enforces the bound $|\widehat{S}(t, \xi)| \lesssim (1 + |\xi|^2 t^{1+\alpha_\infty})^{-1}$. Interpolating this decay to the fractional power $a = \frac{\epsilon+\kappa}{2} \in (0,1)$ yields the strict upper bound
	\begin{equation*}
		|\widehat{S}(t, \xi)| \lesssim \big(1 + |\xi|^2 t^{1+\alpha_\infty}\big)^{-\frac{\epsilon+\kappa}{2}} \le \big(|\xi|^2 t^{1+\alpha_\infty}\big)^{-\frac{\epsilon+\kappa}{2}} \simeq 2^{-j(\epsilon+\kappa)} t^{-\gamma},
	\end{equation*}
	where we used $|\xi| \sim 2^j$ alongside definition \eqref{eq:gamma_bound}. Acting as a multiplier on the $j$-th block, this establishes
	\begin{equation}\label{eq:linear_high_decay_block}
		\|\Delta_j u_L(t)\|_{L^\infty} \le C_2 2^{-j(\epsilon+\kappa)} t^{-\gamma} \|\Delta_j u_0\|_{L^\infty}, \quad \forall j \ge 0.
	\end{equation}
	Multiplying \eqref{eq:linear_high_decay_block} by $t^\gamma 2^{j\epsilon}$ and substituting the initial data relation $\|\Delta_j u_0\|_{L^\infty} \le 2^{j\kappa} \|u_0\|_{\dot{B}^{-\kappa}_{\infty, \infty}}$ gives
	\begin{equation*}
		\sup_{t>0} t^{\gamma} \|u_L(t)\|_{\dot{b}^{\epsilon, +}_{\infty, \infty}} \le C_2 \|u_0\|_{\dot{B}^{-\kappa}_{\infty, \infty}}.
	\end{equation*}
	
	To prove that $u_L \in \mathcal{X}$, we must verify the temporal adherence condition $\lim_{t \to 0^+} t^\gamma \|u_L(t)\|_{\dot{b}^{\epsilon, +}_{\infty, \infty}} = 0$, alongside the strong continuity at the origin $\lim_{t \to 0^+} \|u_L(t) - u_0\|_{\dot{B}^{-\kappa}_{\infty, \infty}} = 0$.
	
	Addressing the high-frequency adherence, let $\eta > 0$ be arbitrary. Invoking the condition \eqref{eq:little_besov_cond} guarantees the existence of a high-frequency threshold $M \in \mathbb{N}$ such that
	\begin{equation}\label{eq:high_freq_tail_u0}
		\sup_{j \ge M} 2^{-j\kappa} \|\Delta_j u_0\|_{L^\infty} < \frac{\eta}{2 C_2}.
	\end{equation}
	Fixing this $M$, we split the supremum of the weighted semi-norm. For $j \ge M$, utilizing the decay estimate \eqref{eq:linear_high_decay_block} and \eqref{eq:high_freq_tail_u0}, we have
	\begin{equation*}
		\sup_{j \ge M} t^\gamma 2^{j\epsilon} \|\Delta_j u_L(t)\|_{L^\infty} \le C_2 \sup_{j \ge M} 2^{-j\kappa} \|\Delta_j u_0\|_{L^\infty} < \frac{\eta}{2}, \quad \forall t > 0.
	\end{equation*}
	For the low-to-intermediate frequencies $0 \le j < M$, the uniform boundedness of $S(t)$ in $L^\infty$ maintains $\|\Delta_j u_L(t)\|_{L^\infty} \le C_1 \|\Delta_j u_0\|_{L^\infty}$. Thus, we obtain
	\begin{equation*}
		\max_{0 \le j < M} t^\gamma 2^{j\epsilon} \|\Delta_j u_L(t)\|_{L^\infty} \le C_1 t^\gamma 2^{M\epsilon} \|u_0\|_{\dot{B}^{-\kappa}_{\infty, \infty}}.
	\end{equation*}
	Since $\gamma > 0$ and $M$ is finite, there exists a time $T_0 > 0$ such that for all $0 < t < T_0$:
	\begin{equation*}
		C_1 t^\gamma 2^{M\epsilon} \|u_0\|_{\dot{B}^{-\kappa}_{\infty, \infty}} < \frac{\eta}{2}.
	\end{equation*}
	Aggregating both frequency regimes confirms that $\sup_{j \ge 0} t^\gamma 2^{j\epsilon} \|\Delta_j u_L(t)\|_{L^\infty} < \eta$ for all $0 < t < T_0$, proving that $\lim_{t \to 0^+} t^\gamma \|u_L(t)\|_{\dot{b}^{\epsilon, +}_{\infty, \infty}} = 0$.
	
	Finally, we address the strong continuity at $t=0$. We partition the supremum of $\|u_L(t) - u_0\|_{\dot{B}^{-\kappa}_{\infty, \infty}}$ into three distinct spectral regions for a given $\eta > 0$:
	\begin{align*}
		\|u_L(t) - u_0\|_{\dot{B}^{-\kappa}_{\infty, \infty}} &\le \sup_{j \ge M} 2^{-j\kappa} \|\Delta_j (S(t)u_0 - u_0)\|_{L^\infty} \\
		&\quad + \sup_{j \le -M'} 2^{-j\kappa} \|\Delta_j (S(t)u_0 - u_0)\|_{L^\infty} \\
		&\quad + \max_{-M' < j < M} 2^{-j\kappa} \|\Delta_j (S(t)u_0 - u_0)\|_{L^\infty} \\
		&=: K_{\text{high}} + K_{\text{low}} + K_{\text{mid}}.
	\end{align*}
	For the term $K_{\text{high}}$, the uniform boundedness of the resolvent operator yields
	\begin{equation*}
		K_{\text{high}} \le (C_1 + 1) \sup_{j \ge M} 2^{-j\kappa} \|\Delta_j u_0\|_{L^\infty}.
	\end{equation*}
	Selecting $M$ sufficiently large via \eqref{eq:little_besov_cond} ensures $K_{\text{high}} < \eta/3$ uniformly for all $t > 0$.
	
	For the low-frequency term $K_{\text{low}}$, we exploit the spectral asymptotics near the origin. By hypothesis (H3), the Laplace transform $\hat{g}(\lambda)$ exhibits a low-frequency power-law of order $\alpha_0 \in [0,1)$. Evaluating the Bromwich integral for $|\xi| \to 0$ dictates the low-frequency multiplier control
	\begin{equation*}
		|\widehat{S}(t, \xi) - 1| \le C_3 t^{1+\alpha_0} |\xi|^2.
	\end{equation*}
	Substituting $|\xi| \sim 2^j$ on the support of the dyadic block for $j \le 0$, we obtain
	\begin{align*}
		2^{-j\kappa} \|\Delta_j (S(t)u_0 - u_0)\|_{L^\infty} &\le C_3 t^{1+\alpha_0} 2^{2j} \big( 2^{-j\kappa} \|\Delta_j u_0\|_{L^\infty} \big) \\
		&\le C_3 t^{1+\alpha_0} 2^{2j} \|u_0\|_{\dot{B}^{-\kappa}_{\infty, \infty}}.
	\end{align*}
	Since $j \le 0$, the multiplier $2^{2j}$ is strictly bounded by $1$. Consequently, the entire low-frequency tail is dominated by the temporal weight $t^{1+\alpha_0}$. Given that we evaluate the strong continuity specifically in the limit $t \to 0^+$, this temporal term vanishes. This geometric fact dispenses entirely with the need for a logarithmic cut-off $M'$, guaranteeing $K_{\text{low}} < \eta/3$ for all sufficiently small $t > 0$.
	
	Lastly, for the intermediate compact band $K_{\text{mid}}$, we have a finite number of dyadic blocks. For each individual block index $-M' < j < M$, the multiplier $\widehat{S}(t, \xi) - 1$ converges to $0$ uniformly on the compact annulus $\mathcal{C}_j$ as $t \to 0^+$. Thus, there exists a time $T_1 > 0$ such that for all $0 < t < T_1$, $K_{\text{mid}} < \eta/3$.
	
	Aggregating these bounds establishes $\|u_L(t) - u_0\|_{\dot{B}^{-\kappa}_{\infty, \infty}} < \eta$ for all $0 < t < \min\{T_0, T_1\}$, which proves that $u_L \in C([0, \infty); \dot{B}^{-\kappa}_{\infty, \infty}(\mathbb{R}^N))$, and thus $u_L \in \mathcal{X}$ with $\|u_L\|_{\mathcal{X}} \le (C_1 + C_2) \delta$.
	
	Equipped with the linear inclusion $u_L \in \mathcal{X}$ and the strict bilinear contraction from Lemma~\ref{lem:bilinear_convergence}, the standard Picard iterative scheme generates a Cauchy sequence within the complete metric space $\mathcal{X}$. Let $R = 2(C_1+C_2)\delta$ define the radius of the stable absorbing ball $\mathcal{B}_R \subset \mathcal{X}$. Provided the initial datum is sufficiently small to satisfy $4 C (C_1 + C_2) \delta \le 1/2$, where $C$ is the universal bilinear contraction constant, the Banach fixed-point theorem dictates that the sequence converges strongly to a unique global-in-time limit $u \in \mathcal{B}_R$.
	
	To elevate this uniqueness from the conditional absorbing ball $\mathcal{B}_R$ to the entire unconditional path space $\mathcal{X}$, let $v \in \mathcal{X}$ be any other global mild solution originating from the same initial data $u_0$. By the definition of the path space $\mathcal{X}$, the continuity of the base topology enforces $\limsup_{t \to 0^+} \|v(t)\|_{\dot{B}^{-\kappa}_{\infty, \infty}} \le \|u_0\|_{\dot{B}^{-\kappa}_{\infty, \infty}} \le \delta$. Simultaneously, the temporal adherence condition guarantees $\lim_{t \to 0^+} t^\gamma \|v(t)\|_{\dot{b}^{\epsilon, +}_{\infty, \infty}} = 0$. Therefore, for any arbitrarily small $\bar{\eta} > 0$, there exists a sufficiently short time $T^\ast > 0$ such that the local restriction of the path norm satisfies $\|v\|_{\mathcal{X}_{T^\ast}} \le \delta + \bar{\eta}$. The identically constructed solution $u \in \mathcal{B}_R$ symmetrically verifies $\|u\|_{\mathcal{X}_{T^\ast}} \le \delta + \bar{\eta}$. Evaluating the difference $w = u - v = -B(u, w) - B(w, v)$ on the truncated space $\mathcal{X}_{T^\ast}$, the bilinear estimate commands
	\begin{equation*}
		\|w\|_{\mathcal{X}_{T^\ast}} \le C \left( \|u\|_{\mathcal{X}_{T^\ast}} + \|v\|_{\mathcal{X}_{T^\ast}} \right) \|w\|_{\mathcal{X}_{T^\ast}} \le 2C(\delta + \bar{\eta}) \|w\|_{\mathcal{X}_{T^\ast}}.
	\end{equation*}
	Since the structural smallness parameterizes $2C\delta \le 1/4$, we select $\bar{\eta}$ sufficiently small such that $2C(\delta + \bar{\eta}) \le 1/2$. This strict contraction forces $\|w\|_{\mathcal{X}_{T^\ast}} = 0$, implying $u \equiv v$ on $[0, T^\ast]$. Standard continuation arguments for the integral Volterra equation propagate this identity globally, ensuring $v \equiv u$ everywhere in $\mathcal{X}$.
	
	To culminate the Hadamard well-posedness framework, we establish continuous dependence on the initial data. Let $u, v \in \mathcal{B}_R$ be two global mild solutions originating from small initial data $u_0, v_0 \in \dot{B}^{-\kappa}_{\infty, \infty}(\mathbb{R}^N)$ satisfying \eqref{eq:little_besov_cond}, respectively. Due to the bilinearity of the non-local form, their difference resolves to $u - v = S(\cdot)(u_0 - v_0) - B(u, u-v) - B(u-v, v)$. Evaluating this difference in the composite norm of $\mathcal{X}$ yields
	\begin{align*}
		\|u - v\|_{\mathcal{X}} &\le \|S(\cdot)(u_0 - v_0)\|_{\mathcal{X}} + C \left( \|u\|_{\mathcal{X}} + \|v\|_{\mathcal{X}} \right) \|u - v\|_{\mathcal{X}} \\
		&\le (C_1 + C_2) \|u_0 - v_0\|_{\dot{B}^{-\kappa}_{\infty, \infty}} + 2 C R \|u - v\|_{\mathcal{X}}.
	\end{align*}
	Since the strict contraction condition enforces $2CR \le 1/2$, the nonlinear term is entirely absorbed into the left-hand side to obtain the global Lipschitz bound
	\begin{equation*}
		\|u - v\|_{\mathcal{X}} \le 2(C_1 + C_2) \|u_0 - v_0\|_{\dot{B}^{-\kappa}_{\infty, \infty}}.
	\end{equation*}
	This boundary guarantees uniform continuity of the flow map, formally confirming the Hadamard well-posedness in the critical homogeneous limit.
\end{proof}

\begin{remark}[The non-separable boundary and open dynamics]\label{rem:non_separable_boundary}
	It is fundamentally instructive to contrast the global well-posedness of the NSHV system established in Theorem~\ref{thm:global_besov_limit} with the classical ill-posedness framework of Bourgain and Pavlovi\'c \cite{bourgain2008} for the standard Navier-Stokes equations. In the classical setting, the critical space $\dot{B}^{-1}_{\infty, \infty}(\mathbb{R}^3)$ is structurally unresolvable: norm inflation occurs intrinsically even for arbitrarily small initial data due to a low-frequency resonant amplification. Conversely, Theorem~\ref{thm:global_besov_limit} demonstrates that the dual-scale memory kernel explicitly suppresses this catastrophic cascade for initial data possessing high-frequency adherence, thereby extending Hadamard well-posedness beyond the strict separability of the little Besov closure $\dot{b}^{-\kappa}_{\infty, \infty}(\mathbb{R}^N)$.
	
	This dichotomy uncovers a sharp topological boundary. The question of whether the NSHV equations are genuinely ill-posed within the completely non-adherent complement $\dot{B}^{-\kappa}_{\infty, \infty}(\mathbb{R}^N) \setminus \dot{b}^{-\kappa, +}_{\infty, \infty}(\mathbb{R}^N)$ remains a challenging open problem. In this exterior regime, the high-frequency dyadic tail fails to vanish at infinity ($\limsup_{j \to \infty} 2^{-j\kappa}\|\Delta_j u_0\|_{L^\infty} > 0$), severely obstructing the strong continuity of the linear hereditary resolvent at the temporal origin ($t \to 0^+$). This mathematical boundary strongly suggests that, while the dual-scale memory successfully tames standard turbulent cascades, highly oscillating lacunary data structures residing in the non-separable tail may still trigger localized norm inflation, thereby highlighting the exact limits of non-local parabolic regularizations.
\end{remark}

\begin{remark}[The paradox of spatial versus temporal regularization]
	At first glance, it appears deeply counterintuitive that the classical Navier-Stokes equations---governed by the heat semigroup, which instantaneously imposes infinite spatial regularization ($C^\infty$)---suffer from catastrophic ill-posedness within the critical space $\dot{B}^{-1}_{\infty, \infty}(\mathbb{R}^N)$, whereas the NSHV system---driven by a resolvent with severely restricted spatial smoothing capacity (bounded strictly by the pseudo-differential class $S^{-2}_{1,0}$)---remains globally well-posed.
	
	This paradox elegantly exposes the fundamental mechanics of the Bourgain-Pavlovi\'c norm inflation: the catastrophic cascade is inherently a temporal resonance phenomenon, not a spatial one. Infinite spatial smoothness is entirely defenseless against the instantaneous temporal accumulation of convective energy at macroscopic scales. In stark contrast, the dual-scale hereditary memory provides a structural fractional retardation in time (analytically encapsulated by the temporal smoothing weight $t^\gamma$). This temporal dampening strictly throttles the singular Volterra integration, suffocating the high-high convective resonance before it can inflate the low-frequency macroscopic modes. Thus, it is the non-local temporal memory, rather than instantaneous spatial viscosity, that ultimately cures the critical topological collapse.
\end{remark}

\section{Concluding remarks}

This work delineates the topological boundaries governing the well-posedness of the Navier-Stokes equations when subjected to dual-scale hereditary viscosity. By embedding the linear resolvent into a pseudo-differential framework within the H\"ormander multiplier class $S^{-2}_{1,0}$, we systematically addressed the analytical obstructions posed by the absence of exact global scale invariance. The identification of the geometric threshold $p_c = N(\frac{1+\alpha_\infty}{1-\alpha_\infty})$ explicitly confirms that within the supercritical regime $1 < p < p_c$, the non-local convective transport strictly dominates the non-local dissipation, driving an anomalous momentum transfer that culminates in instantaneous norm inflation.

A fundamental consequence of this framework emerges in the scale-critical topological limit $p \to \infty$. By demonstrating that an asymmetric interpolation within Bony's para-differential calculus effectively neutralizes the high-high convective cascade, we establish that the Bourgain-Pavlovi\'c pathology is not an intrinsic feature of all non-local fluid systems. Instead, this topological collapse is an instability structurally confined to the non-separable boundary of the Besov topology $\dot{B}^{-\kappa}_{\infty, \infty}(\mathbb{R}^N)$. The fading memory effect provides sufficient fractional dissipation to suppress turbulent backscatter into macroscopic modes, ensuring global solutions for initial data satisfying a high-frequency adherence condition. Notably, the macroscopic temporal dampening inherently regulates the low-frequency limit, thereby extending the topological boundary of existence beyond the strict separability restrictions of the little Besov closure $\dot{b}^{-\kappa}_{\infty, \infty}(\mathbb{R}^N)$.

A natural extension of this theory involves investigating whether the interplay between highly oscillating lacunary data in the non-separable tail and the dual-scale hereditary retardation triggers localized norm inflation, or if the memory effects extend their stabilizing properties entirely beyond the separable adherence. We anticipate that the harmonic analysis machinery and the fractionally weighted Kato path spaces developed herein provide a robust blueprint for examining advanced non-Newtonian fluid dynamics models.

	\
	
	\noindent{\bf \large Acknowledgement and declarations}
	
	\
	
	\noindent Bruno de Andrade is partially supported by CNPQ (grant 310384/2022-2) and FAPITEC/SE (grant 019203.01303/2024-1).\\
	
	\noindent \textbf{Data availability:} Data sharing is not applicable to this article as no datasets were generated or analyzed during the current study.\\

	\noindent \textbf{Conflict of interest:} The author declares that he has no conflict of interest.\\
	
	\noindent{\bf Declaration of Generative AI and AI-assisted technologies in the writing process:}	During the preparation of this work, the author used Gemini to improve the English language. After using this tool, the author reviewed and edited the content as needed and take full responsibility for the final version of the manuscript.

\end{document}